\newtheorem{theorem}{Theorem}[section]
\newtheorem{lemma}[theorem]{Lemma}
\newtheorem{proposition}[theorem]{Proposition}
\newtheorem{remark}[theorem]{Remark}
\newtheorem{example}[theorem]{Example}
\newtheorem{problem}{Problem}
\newtheorem{question}[theorem]{Question}
\newtheorem{definition}[theorem]{Definition}
\newtheorem{precor}{{\bf Corollary}}
\newtheorem{precon}{{\bf Conjecture}}
\newtheorem{predefin}{{\bf Definition}}
\newtheorem{preexm}{{\bf Example}}
\newtheorem{preappl}{{\bf Application}}
\newtheorem{prelem}{{\bf Lemma}}
\newtheorem{preproof}{{\bf Proof.\ }}
\newenvironment{proof}[1]{\begin{preproof}{\rm
               #1}\hfill{$\blacksquare$}}{\end{preproof}}
\newtheorem{presproof}{{\bf Sketch of Proof.\ }}
\newtheorem{prethm}{{\bf Theorem}}
\newtheorem{preconj}{{\bf Conjecture}}
\newtheorem{preques}{{\bf Question}}
\newtheorem{prealphthm}{{\bf Theorem}}
\newtheorem{prepro}{{\bf Proposition}}
\newtheorem{preprb}{{\bf Problem}}
\newtheorem{prerem}{{\bf Remark}}
\def\conct[#1,#2]{\mbox {${#1} \leftrightarrow {#2}$}}
\def\dconct[#1,#2]{\mbox {${#1} \rightarrow {#2}$}}
\def\deg[#1,#2]{\mbox {$d_{_{#1}}(#2)$}}
\def\mindeg[#1]{\mbox {$\delta_{_{#1}}$}}
\def\maxdeg[#1]{\mbox {$\Delta_{_{#1}}$}}
\def\outdeg[#1,#2]{\mbox {$d_{_{#1}}^{^+}(#2)$}}
\def\minoutdeg[#1]{\mbox {$\delta_{_{#1}}^{^+}$}}
\def\maxoutdeg[#1]{\mbox {$\Delta_{_{#1}}^{^+}$}}
\def\indeg[#1,#2]{\mbox {$d_{_{#1}}^{^-}(#2)$}}
\def\minindeg[#1]{\mbox {$\delta_{_{#1}}^{^-}$}}
\def\maxindeg[#1]{\mbox {$\Delta_{_{#1}}^{^-}$}}
\def\dre[#1,#2,#3]{\mbox {${\cal E}_{_{#3}}(#1,#2)$}}
\def\pdre[#1,#2,#3]{\mbox {${\cal P}_{_{#3}}(#1,#2)$}}
\def\var[#1,#2]{\mbox {${\rm Var}_{_{#1}}(#2)$}}
\def\ls[#1]{\mbox {$\xi^{^{#1}}$}}
\def\hom[#1,#2]{\mbox {${\rm Hom}({#1},{#2})$}}
\def\onvhom[#1,#2]{\mbox {${\rm Hom^{v}}(#1,#2)$}}
\def\onehom[#1,#2]{\mbox {${\rm Hom^{e}}(#1,#2)$}}
\def\core[#1]{\mbox {$#1^{^{\bullet}}$}}
\def\cay[#1,#2]{\mbox {${\rm Cay}({#1},{#2})$}}
\def\Cay{{\rm Cay}}
\def\cays[#1,#2]{\mbox {${\rm Cay_{s}}({#1},{#2})$}}
\def\dirc[#1]{\mbox {$\stackrel{\rightarrow}{C}_{_{#1}}$}}
\def\cycl[#1]{\mbox {${\bf Z}_{_{#1}}$}}
\def\sdg[#1]{\mbox {$\stackrel{\leftrightarrow}{#1}$}}
\def\Ga{\Gamma}
\def\2sc{{2{\rm SCay}}}
\def\Aut{{\rm Aut}}
\def\Sym{{\rm Sym}}
\def\id{{\sf{id}}}
\def\vp{\varphi}
\begin{document}
\begin{center}
{\Large \bf Two-sided Cayley graphs}\\
\vspace*{0.5cm}
{\bf Moharram N. Iradmusa$^a$, Cheryl E. Praeger$^b$}\\
\vspace*{0.2cm}
{\it {\small $^{a}$Department of Mathematical Sciences, Shahid Beheshti University, Tehran, Iran\\
iradmusa@ipm.ir\\
$^{b}$School of Mathematics and Statistics, The University of Western Australia,\\
35 Stirling Highway, Crawley, WA 6009, Australia\\
cheryl.praeger@uwa.edu.au\\
Also affiliated with King Abdulaziz University, Jeddah, Saudi Arabia}}\\
\end{center}
\begin{abstract}
\noindent 
We introduce a family of graphs that generalises the class 
of Cayley graphs. For non-empty subsets $L,R$ of a group $G$, 
the two-sided Cayley graph $\2sc(G;L,R)$ is the directed graph 
with vertex set $G$ and an arc from $x$ to $y$ if and only if $y=\ell^{-1}xr$ for 
some $\ell\in L$ and $r\in R$. Thus, in common with Cayley graphs, 
two-sided Cayley graphs may be useful to model networks as 
the same routing and communication scheme can be implemented at each vertex.
We determine when two-sided Cayley graphs are simple undirected graphs, and
give sufficient conditions for them to be connected, vertex-transitive, 
or Cayley graphs. Several open problems are posed. 
Many examples are given, including one on 12 vertices with connected 
components of sizes 4 and 8.
\\
\begin{itemize}
\item[]{{\footnotesize {\bf Key words:}\ Cayley graph, vertex-transitive graph, Group.}}
\item[]{ {\footnotesize {\bf Subject classification: 05C25} .}}
\end{itemize}
\end{abstract}
\section{Introduction}

In this paper we study a family of graphs that generalises the class 
of Cayley graphs. We call the new graphs two-sided Cayley graphs and 
explore some of their properties. For a subset $S$ of a group $G$ such 
that the identity $e\notin S$ and $S=S^{-1}$ (where $S^{-1}=\{s^{-1}|
s\in S\}$), the \emph{Cayley graph} $\Ga=\Cay(G,S)$ is the graph with 
vertex set $G$ such that a vertex pair $(x, y)$ is an arc if and only 
if $xy^{-1}\in S$. The conditions $e\not\in S$ and $S=S^{-1}$ ensure 
that $\Ga$ may be viewed as a simple undirected graph, and that the 
group $G$ acts by right multiplication as a vertex-transitive group 
of automorphisms \cite{bei04a}. Cayley graphs were introduced by Arthur 
Cayley in 1878 to explain the concept of an abstract group given by a set 
of generators \cite{cay78a}. There are many meaningful applications of 
Cayley graphs in molecular biology, computer science and coding theory
\cite{baf96a, hah97a, kec95a,lak93a}. Because of their symmetry properties, 
Cayley graphs are used as models for many interconnection networks. Moreover 
various generalisations of Cayley graphs have been introduced in the literature 
as prototypes of transitive graphs with some degree of success: for instance, 
quasi-Cayley graphs by Gauyacq \cite{gau97a},  various kinds of groupoid 
graphs by Mwambene  \cite{mwa01a, mwa09a}, group action graphs by Annexstein 
et al \cite{ann90a}, and  general semigroup graphs by Kelarev and the 
second author \cite{kel03a}. 

One of the advantages of using Cayley graphs as models of networks is 
that their vertex-transitivity makes it possible to implement the same 
routing and communication scheme at each node of the network. At each 
vertex $x$ of $\Cay(G,S)$ the edges from $x$ go to the vertices $sx$ 
for $s\in S$. Thus we may view $S$  as a `connection subset' $\hat S:=
\{\lambda_{e,s}\mid s\in S\}$ of permutations of $G$ (where $\lambda_{e,s}$ 
is the permutation $g\mapsto gs$) that determines adjacency locally at each vertex.   

\subsection*{Two-sided Cayley graphs: basic properties}

A two-sided Cayley graph also possesses this local connection property, and is more general in the sense that the connection subset of permutations may act \emph{on both sides}, that is to say, we consider permutations of the form $\lambda_{\ell,r}: g\mapsto \ell^{-1}gr$ for certain $\ell,r\in G$.

\begin{definition}\label{def:2sc}
{\rm
For non-empty subsets $L,R$ of a group $G$, we define the \emph{two-sided 
Cayley graph} $\2sc(G;L,R)$ as the (directed) graph with vertex
set $G$ with an arc from $x$ to $y$ if and only if $y=\ell^{-1}xr$ for 
some $\ell\in L$ and $r\in R$. The \emph{connection set} of $\2sc(G;L,R)$ 
is defined as the set $\hat{S}(L,R)=\{\lambda_{\ell,r}: (\ell,r)\in L\times R\}$.
}
\end{definition}

We note that the edges of $\2sc(G;L,R)$ from a vertex $x$ go to the vertices 
$(x)\lambda$, for $\lambda\in\hat S(L,R)$. If the adjacency relation is symmetric, 
in the sense that $(x,y)$ is an arc if and only if $(y,x)$ is an arc, then 
$\2sc(G;LR)$ will be regarded as an undirected graph. Conditions for this to 
occur and also for the graph to contain no loops or multiple edges, that is to say, conditions 
for  $\2sc(G;LR)$ to be a simple undirected graph, can be given in terms of 
properties of  $L$ and $R$ as follows.

\begin{definition}\label{def:2scprop}
{\rm
A pair $(L,R)$ of subsets of a group $G$ with identity element $e$  
has the \emph{$2$S-Cayley property} if both $L, R$ are non-empty, 
and the following conditions all hold.
\begin{itemize}
\item[(1)] $L^{-1}gR=LgR^{-1}$ for each $g\in G$;
\item[(2)] $L^g\cap R=\varnothing$ for each $g\in G$;
\item[(3)] $(L\,L^{-1})^g\cap (R\,R^{-1})=\{e\}$ for each $g\in G$.
\end{itemize} 
}
\end{definition} 

\begin{theorem}\label{thm:2scsimple}
Let $L, R$ be nonempty subsets of a group $G$. Then $\2sc(G;L,R)$ is a simple undirected 
graph if and only if $(L,R)$ has the $2$S-Cayley 
property. If this is the case then $\2sc(G;L,R)$ has valency $|L|.|R|$.
\end{theorem}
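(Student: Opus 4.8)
The plan is to prove the biconditional by decomposing ``simple undirected graph'' into its three constituent requirements---symmetry of the arc relation, absence of loops, and absence of multiple edges---and matching each one to a single clause of the $2$S-Cayley property. The computational backbone throughout is the description of the out-neighbourhood of a vertex: by Definition~\ref{def:2sc}, $(x,y)$ is an arc precisely when $y=\ell^{-1}xr$ for some $(\ell,r)\in L\times R$, i.e. when $y\in L^{-1}xR$. I will establish each of the three matchings as an equivalence holding for every $g\in G$, so that the universally quantified set identities in Definition~\ref{def:2scprop} correspond exactly to the vertex-by-vertex graph conditions.

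First I would treat symmetry. Rewriting ``$(y,x)$ is an arc'' as $x=\ell^{-1}yr$ and solving for $y$ gives $y=\ell x r^{-1}\in LxR^{-1}$, whereas ``$(x,y)$ is an arc'' means $y\in L^{-1}xR$. Hence the relation is symmetric for all pairs iff $L^{-1}xR=LxR^{-1}$ for every $x$, which is condition~(1). For loops, a loop at $x$ means $x=\ell^{-1}xr$; rearranging yields $r=x^{-1}\ell x=\ell^{x}$, so a loop at $x$ exists iff $L^{x}\cap R\neq\varnothing$, and looplessness is exactly condition~(2).

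The remaining, slightly more delicate, step is multiple edges. If two pairs $(\ell_1,r_1)\neq(\ell_2,r_2)$ send $x$ to the same vertex, then $\ell_1^{-1}xr_1=\ell_2^{-1}xr_2$ rearranges to $(\ell_2\ell_1^{-1})^{x}=r_2r_1^{-1}$; writing $a=\ell_2\ell_1^{-1}\in LL^{-1}$ and $b=r_2r_1^{-1}\in RR^{-1}$ this says $a^{x}=b$. Since conjugation is injective, $a=e$ forces $b=e$ and hence $(\ell_1,r_1)=(\ell_2,r_2)$; thus a genuine multiple edge corresponds to a nonidentity element of $(LL^{-1})^{x}\cap(RR^{-1})$. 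As $e$ always lies in this intersection, the graph is free of multiple edges iff $(LL^{-1})^{x}\cap(RR^{-1})=\{e\}$ for every $x$, which is condition~(3). Combining the three equivalences gives that $\2sc(G;L,R)$ is simple and undirected iff $(L,R)$ has the $2$S-Cayley property. Finally, for the valency: once multiple edges are excluded, the $|L|\cdot|R|$ products $\ell^{-1}xr$ are pairwise distinct, so the out-degree of every vertex is $|L|\cdot|R|$, and by symmetry this is the valency.

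I expect the main obstacle to be bookkeeping rather than a conceptual difficulty: the conjugation and inversion conventions must be kept consistent across the three computations, and each equivalence must be checked to run in both directions and for \emph{every} $g\in G$, so that ``no multiple edge at any vertex'' really is equivalent to the set identity quantified over all of $G$, and similarly for the other two clauses.
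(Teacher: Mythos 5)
Your proposal is correct and takes essentially the same route as the paper: the paper's Lemma~\ref{lem:2scsimple} makes exactly your three matchings (condition~(1) $\leftrightarrow$ symmetry, condition~(2) $\leftrightarrow$ no loops, condition~(3) $\leftrightarrow$ no multiple arcs), with the same algebraic rearrangements, and then counts the valency by observing that condition~(3) forces distinct pairs $(\ell,r)$ to give distinct neighbours $\ell^{-1}xr$. The only cosmetic difference is that the paper additionally phrases condition~(2) in terms of the maps $\lambda_{\ell,r}$ being derangements, which your argument does not need.
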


We prove Theorem~\ref{thm:2scsimple} in Section~\ref{sec:props} by 
studying separately the significance of each of the conditions in 
Definition~\ref{def:2scprop} (Lemma~\ref{lem:2scsimple}). 
If both of the sets $L,R$ are inverse-closed, that is to say, if $L=L^{-1}$ and $R=R^{-1}$,
then the conditions of Definition~\ref{def:2scprop} are somewhat simpler. However 
it is possible for a pair of subsets to have the $2$S-Cayley property but not both
be inverse-closed, as demonstrated by Example~\ref{ex0}. 


We also show in Lemma~\ref{lem:2scsimple} that, for $(L,R)$ with the $2$S-Cayley property, 
each element $\lambda_{\ell,r}$ of $\hat S(L,R)$ is a 
\emph{derangement} of $G$, that is to say, $\lambda_{\ell,r}$
has no fixed points in $G$. Proposition~\ref{prop:derangement} provides
a set of conditions on the connection set $\hat S(L,R)$ which 
hold for $(L,R)$ with the $2$S-Cayley property.

\begin{remark}\label{rem:2sc}{\rm
(a) Each Cayley graph is a two-sided Cayley graph, namely if 
$e\not\in S$ and $S=S^{-1}$, then $\Cay(G,S)=\2sc(G;S,\{e\})$, 
and it is easy to check that $(S,\{e\})$ has the 2S-Cayley 
property. There are sometimes other possibilities for the 
subsets $L, R$ giving the same Cayley graph. We describe 
such a graph in Example~\ref{ex1}.

\medskip\noindent
(b)	As mentioned above each Cayley graph is vertex transitive. 
This is not the case for two-sided Cayley graphs, indicating in 
particular that these graphs form a strictly larger family of graphs: 
we present in Example~\ref{ex2} a two-sided Cayley graph with two 
non-isomorphic connected components. 

\medskip\noindent
(c) A Cayley graph $\Cay(G,S)$ is connected if and only if $S$ 
generates $G$. Theorem~\ref{thm:conn} gives an analogue of this 
connectivity criterion for two-sided Cayley graphs in the case where $L, R$
are inverse-closed subsets.
}
\end{remark}

By a \emph{word} in a subset $L$ of a group $G$, we mean a string 
$w=\ell_1\ell_2\dots\ell_k$ with each $\ell_i\in L$; the integer 
$k$ is called the \emph{length} of $w$, denoted $|w|$, and we often 
identify $w$ with its \emph{evaluation} in $G$ (the element of $G$ 
obtained by multiplying together the $\ell_i$ in the given order). 

\begin{theorem}\label{thm:conn}
Let  $L, R$ be inverse-closed subsets of a group $G$ with the $2$S-Cayley property, 
and let $\Gamma=\2sc(G;L,R)$. Then $\Gamma$ is connected if and only if
\begin{itemize}
\item[$(*)$] $G=\langle L\rangle\,\langle R\rangle$, and there 
exist words $w$ in $L$ and $w'$ in $R$, with lengths of opposite 
parity, such that $ww'=e$ in $G$. 
\end{itemize}
Further, if  $G=\langle L\rangle\,\langle R\rangle$, but condition 
$(*)$ does not hold, then $\Gamma$ is disconnected with exactly 
two connected components. 
\end{theorem}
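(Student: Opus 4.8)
The plan is to describe explicitly the connected component $C=C(e)$ of the identity vertex $e$ and then to extract both the criterion $(*)$ and the two-component statement from a single parity invariant. Since $L,R$ are inverse-closed, $\Gamma$ is undirected by Lemma~\ref{lem:2scsimple} and each edge is a forward arc when traversed in either direction, so $C$ is the set of endpoints of forward walks from $e$, and $\Gamma$ is connected iff $C=G$. A walk applying $(\ell_1,r_1),\dots,(\ell_k,r_k)$ in turn sends $e$ to $(\ell_1\cdots\ell_k)^{-1}(r_1\cdots r_k)$; writing $W_k(S)$ for the set of values of length-$k$ words in $S$ and using $W_k(L)^{-1}=W_k(L)$, we get $C=\bigcup_{k\ge0}W_k(L)\,W_k(R)$. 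I would then set up the parity bookkeeping: let $A_0,A_1\subseteq\langle L\rangle$ be the elements admitting an even-, respectively odd-length, $L$-word, and define $B_0,B_1\subseteq\langle R\rangle$ likewise, noting that $A_0$ is a subgroup and that $A_0\cap A_1\neq\varnothing$ forces $e\in A_1$. Inserting a factor $\ell\ell^{-1}$ or $rr^{-1}$ raises a word's length by $2$ without changing its value, so the two factor-lengths can be equalised whenever their parities agree; this yields $C=A_0B_0\cup A_1B_1$.

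If $G\neq\langle L\rangle\langle R\rangle$ then $C\subseteq\langle L\rangle\langle R\rangle\subsetneq G$, so $\Gamma$ is disconnected and $(*)$ fails, as required. Hence assume $G=\langle L\rangle\langle R\rangle=A_0B_0\cup A_0B_1\cup A_1B_0\cup A_1B_1$, so that connectivity amounts to the cross terms $A_0B_1,A_1B_0$ already lying in $C$. First I would rewrite $(*)$: a relation $ww'=e$ with $w$ an $L$-word and $w'$ an $R$-word of opposite parity says exactly that $w=w'^{-1}\in\langle L\rangle\cap\langle R\rangle$ carries different $L$- and $R$-parities, i.e. that $A_1\cap B_0\neq\varnothing$ or $A_0\cap B_1\neq\varnothing$ (the degenerate case $e\in A_1$ is absorbed here via $e\in A_1\cap B_0$). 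If $x\in A_1\cap B_0$, then $ab=(ax)(x^{-1}b)\in A_1B_1$ for $a\in A_0,b\in B_1$, and $ab=(ax^{-1})(xb)\in A_0B_0$ for $a\in A_1,b\in B_0$; thus $A_0B_1\cup A_1B_0\subseteq C$ and $\Gamma$ is connected, proving $(*)\Rightarrow$ connected (the case $A_0\cap B_1\neq\varnothing$ is symmetric).

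For the converse and the count, suppose $G=\langle L\rangle\langle R\rangle$ but $(*)$ fails, so $A_1\cap B_0=A_0\cap B_1=\varnothing$; then also $A_0\cap A_1=\varnothing=B_0\cap B_1$, so the length-parity maps $\phi_L\colon\langle L\rangle\to\mathbb{Z}_2$ and $\phi_R\colon\langle R\rangle\to\mathbb{Z}_2$ are well-defined homomorphisms. The crux is that $\psi(uv):=\phi_L(u)+\phi_R(v)$ is a well-defined map $G\to\mathbb{Z}_2$, since independence of the factorisation reduces to $\phi_L(t)=\phi_R(t)$ for all $t\in\langle L\rangle\cap\langle R\rangle$, which is exactly the failure of $(*)$. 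Because $\phi_L(\ell^{-1})=\phi_R(r)=1$, a short computation gives $\psi(\ell^{-1}gr)=\psi(g)$, so $\psi$ is constant on each component; as $\psi(e)=0\neq1=\psi(\ell)$, the graph is disconnected. That there are \emph{exactly} two components then follows by identifying the fibres as components: $\psi^{-1}(0)=A_0B_0\cup A_1B_1=C$ by the first paragraph, and the same walk analysis started at $\ell$ gives $C(\ell)=A_1B_0\cup A_0B_1=\psi^{-1}(1)$.

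I expect the main difficulty to be the well-definedness of $\psi$ together with the exact matching of the two set conditions $A_1\cap B_0\neq\varnothing$, $A_0\cap B_1\neq\varnothing$ to the single clause $(*)$; the argument must correctly absorb the degenerate possibility that a nontrivial word evaluates to $e$ (equivalently $A_0=A_1$ or $B_0=B_1$), and the padding step behind $C=A_0B_0\cup A_1B_1$ must be phrased so that equalising lengths never disturbs parities.
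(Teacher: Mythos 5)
Your proof is correct and takes essentially the same approach as the paper's: walks from $e$ end at products of equal-length words in $L=L^{-1}$ and in $R$, the padding trick (inserting $\ell\ell^{-1}$ or $rr^{-1}$) equalises lengths whenever parities agree, and when $(*)$ fails the parity of $|w|+|w'|$ is the invariant separating exactly two components --- your $\psi$ is precisely the paper's $\delta$ from Remark~\ref{rem:conn}(a), and your fibres $\psi^{-1}(0)$, $\psi^{-1}(1)$ are its components $\mathcal{C}_0$, $\mathcal{C}_1$. The only real difference is presentational: you make the well-definedness of the parity invariant fully explicit (reducing it to $\phi_L=\phi_R$ on $\langle L\rangle\cap\langle R\rangle$), a step the paper asserts more briefly.
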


\begin{remark}\label{rem:conn}
{\rm
$(a)$\ The exceptional case of Theorem~\ref{thm:conn} does arise, see the 
family of graphs in Example~\ref{ex3}. Moreover, the proof of 
Theorem~\ref{thm:conn} in Section~\ref{sec:props} shows that,  
if  $G=\langle L\rangle\,
\langle R\rangle$ but condition $(*)$ does not hold then, for each $g\in G$, 
and each expression $g=ww'$, where $w$ and $w'$ are words in $L$ and 
$R$ respectively, the parity of the sum $|w|+|w'|$ is independent of the
words $w, w'$, and depends only on $g$. Let $\delta(g)\in\{0,1\}$ where 
$\delta(g)\equiv |w|+|w'|\pmod{2}$. We show that the connected components  
of $\Gamma$ are the sets $\mathcal{C}_\delta=\{g\mid g\in G,\ \delta(g)=\delta\}$, for $\delta\in\{0,1\}$.
In particular $e\in\mathcal{C}_0$ and $L\cup R\subseteq\mathcal{C}_1$. 

\medskip\noindent$(b)$\ It is not clear how to modify the proof to cover the 
cases where not both of $L, R$ are inverse-closed. 
}
\end{remark}

\begin{problem}\label{probconn}{
 Find necessary and sufficient conditions for  $\2sc(G;L,R)$
to be connected, where $L,R$ have the $2S$-Cayley property but are not both inverse-closed.
}
\end{problem}

We now mention several generic isomorphisms between two-sided Cayley graphs for a group $G$,
that are induced from automorphisms of $G$. This result is proved in Section~\ref{sec:iso}.

\begin{theorem}\label{thm:iso}
Let  $L, R$ be non-empty subsets of a group $G$, let  $x,y\in G$, and 
$\sigma\in \Aut(G)$.
\begin{enumerate}
\item[(a)] If at least one of the following pairs has the  $2$S-Cayley property,
then they all do:
$(L,R),\quad (R,L),\quad (L^{\sigma},R^{\sigma}),\quad (L^{x},R^{y})$.
\item[(b)] Let $\Gamma=\2sc(G;L,R)$ and suppose that $(L,R)$ 
has the $2$S-Cayley property. Then
\[
\2sc(G;L,R)\cong \2sc(G;R,L)\cong \2sc(G;L^{\sigma},R^{\sigma})\cong \2sc(G;L^x,R^y).
\]
\end{enumerate}
\end{theorem}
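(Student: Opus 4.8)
The plan is to prove the three isomorphisms in Theorem~\ref{thm:iso}(b) by exhibiting explicit bijections on the vertex set $G$ and checking they preserve the arc relation, and to prove part (a) as a corollary of these constructions by verifying that each map respects the three conditions of Definition~\ref{def:2scprop}. Since the graph $\2sc(G;L,R)$ has an arc from $x$ to $y$ precisely when $y=\ell^{-1}xr$ for some $(\ell,r)\in L\times R$, the whole task reduces to finding, for each target pair, a permutation $\pi$ of $G$ such that $x\to y$ is an arc in $\2sc(G;L,R)$ if and only if $x^\pi\to y^\pi$ is an arc in the target graph.

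\medskip\noindent
First I would handle the three cases separately, as each suggests a different map. For $\2sc(G;R,L)$, the natural candidate is the inversion map $\iota:g\mapsto g^{-1}$: if $y=\ell^{-1}xr$ then $y^{-1}=r^{-1}x^{-1}\ell=(r)^{-1}x^{-1}(\ell)$, which is exactly an arc of $\2sc(G;R,L)$ with the roles of $L$ and $R$ swapped. For $\2sc(G;L^\sigma,R^\sigma)$, the map is $\sigma$ itself: applying the homomorphism to $y=\ell^{-1}xr$ gives $y^\sigma=(\ell^\sigma)^{-1}x^\sigma r^\sigma$, so $\sigma$ carries arcs to arcs, and being an automorphism it is a bijection. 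For the conjugated pair $\2sc(G;L^x,R^y)$, where $L^x=x^{-1}Lx$ and $R^y=y^{-1}Ry$, I would try a two-sided translation $\tau:g\mapsto x^{-1}gy$ (or its appropriate variant); substituting into an arc $b=\ell^{-1}ar$ and conjugating should produce $b^\tau=(\ell^x)^{-1}a^\tau r^y$ up to bookkeeping of the exact translation used, so the main care here is selecting the correct left/right translates so the conjugates $L^x$ and $R^y$ appear cleanly. In each case one must also confirm the map is a genuine bijection of $G$ (immediate for $\iota$, $\sigma$, and any translation) and that the arc-preservation is an "if and only if," which follows because each map is invertible and its inverse is of the same type.

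\medskip\noindent
For part (a), I would argue that each of these bijections transports the $2$S-Cayley property between the relevant pairs. Rather than re-deriving the conditions, the cleanest route is to invoke Theorem~\ref{thm:2scsimple}: a pair has the $2$S-Cayley property if and only if the corresponding two-sided Cayley graph is a simple undirected graph. Since the maps $\iota$, $\sigma$, and $\tau$ are graph isomorphisms (as established for part (b), but note these constructions do not presuppose the property), being a simple undirected graph is an isomorphism invariant; hence if one graph in the list is simple and undirected then so are all the others, and therefore all four pairs have the $2$S-Cayley property simultaneously. This gives (a) at once and also shows the isomorphisms in (b) are between bona fide simple graphs.

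\medskip\noindent
The main obstacle I anticipate is the conjugated case $(L^x,R^y)$: unlike inversion and $\sigma$, the correct translation is not forced by symmetry, and one must check that a single map simultaneously produces the left-conjugate $L^x$ and the right-conjugate $R^y$ with possibly \emph{different} conjugating elements $x$ and $y$. It may be necessary to realize this isomorphism as a composition of two simpler translations, or to verify directly that $\lambda_{\ell,r}\mapsto\lambda_{\ell^x,r^y}$ is the induced map on connection sets and that it is compatible with a single vertex bijection. A secondary subtlety is confirming, for part (a), that the graph-isomorphism argument is not circular: the constructions of $\iota$, $\sigma$, and $\tau$ as arc-preserving bijections require only the definition of the arc relation and not the $2$S-Cayley property, so they are available before the property is known to hold, which is exactly what lets the "simple undirected graph" invariant transfer the property across all four pairs.
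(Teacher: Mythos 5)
Your part (b) is essentially the paper's proof: the paper uses exactly your three maps $g\mapsto g^{-1}$, $g\mapsto g^{\sigma}$ and $g\mapsto x^{-1}gy$, and the conjugation case that you flag as the main obstacle is in fact clean with a single map $\tau\colon g\mapsto x^{-1}gy$, since $\tau(\ell^{-1}gr)=x^{-1}\ell^{-1}gry=(\ell^{x})^{-1}(x^{-1}gy)(r^{y})$; no composition of translations or further bookkeeping is needed.

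Part (a) is where you diverge from the paper, and where there is a genuine gap. The paper proves (a) by directly checking that each of the three conditions of Definition~\ref{def:2scprop} holds for one of the four pairs if and only if it holds for all of them, via set-theoretic identities such as $(L^{\sigma}(L^{\sigma})^{-1})^{g}\cap R^{\sigma}(R^{\sigma})^{-1}=\bigl((L\,L^{-1})^{g'}\cap R\,R^{-1}\bigr)^{\sigma}$ for a suitable $g'$. You instead transport ``simple undirected graph'' along the isomorphisms of (b) and invoke Theorem~\ref{thm:2scsimple}. The problem is that what you establish in (b) is only that the maps preserve the binary arc relation, and condition (3) of Definition~\ref{def:2scprop} is not a property of that relation: by Lemma~\ref{lem:2scsimple}(c) it is precisely the absence of \emph{multiple arcs}, i.e.\ a statement about how many pairs $(\ell,r)\in L\times R$ realize a given arc. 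A bijection preserving the adjacency relation need not preserve this. Concretely, in $G=\mathbb{Z}_4=\langle a\rangle$ the pairs $(\{a,a^2,a^3\},\{e\})$ and $(\{e,a\},\{a^2,a^3\})$ define the very same arc relation (every vertex adjacent to the other three, no loops), and the first pair has the $2$S-Cayley property, but the second violates condition (3): the distinct pairs $(e,a^2)$ and $(a,a^3)$ produce the same arc, and indeed $L\,L^{-1}\cap R\,R^{-1}=\{e,a,a^3\}\neq\{e\}$. So the principle you invoke --- ``being a simple undirected graph is an isomorphism invariant'' --- is valid only for isomorphisms of directed multigraphs, not for relation-preserving bijections, and as written your argument does not transfer condition (3) to the other three pairs.

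The repair is exactly the point you raise tentatively in your last paragraph but never carry out: verify that each vertex bijection is compatible with a bijection of connection sets, namely $(\ell,r)\mapsto(r,\ell)$, $(\ell,r)\mapsto(\ell^{\sigma},r^{\sigma})$ and $(\ell,r)\mapsto(\ell^{x},r^{y})$, so that the number of pairs realizing an arc $(u,v)$ equals the number realizing its image arc. With that check the maps become genuine multigraph isomorphisms, and your transfer argument (including your non-circularity observation, which is correct and necessary) goes through; alternatively one can transfer conditions (1) and (2) by your argument and verify condition (3) directly, which is in effect what the paper does for all three conditions.
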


The smallest vertex transitive graph that is not a Cayley graph is 
the Petersen graph $P$ on $10$ vertices, and we wondered whether $P$ could be a 
two-sided Cayley graph for some group of order $10$. However, 
the following result about connected two-sided Cayley graphs of prime valency 
shows that $P$ cannot be a two-sided Cayley graph, since $P$ has valency 3 
and is not a Cayley graph, and the only groups of order 10 are abelian or dihedral.

\begin{theorem}\label{cor:primval}
Let  $L, R$ be subsets of a group $G$ with the $2$S-Cayley property, 
and suppose that $\Gamma=\2sc(G;L,R)$ is connected and regular of 
prime valency $p$.  
Then either $\Ga$ is a Cayley graph, or $p$ is odd and the 
following all hold, up to interchanging $L$ and $R$.
\begin{enumerate}
\item[(a)] $R=\{r\}$, where $r$ lies in a $G$-conjugacy 
class $\mathcal{C}$ of non-central elements; 
\item[(b)] $G=\langle L\rangle$,  $|L|=p$, and
$L\cap\mathcal{C}=\varnothing$. 
\end{enumerate}
Moreover if $G$ is abelian or if $G$ is dihedral of twice-odd
order, then $\Ga$ is a Cayley graph.
\end{theorem}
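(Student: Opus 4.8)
The plan is to first exploit the prime valency. By Theorem~\ref{thm:2scsimple} the valency is $|L|\cdot|R| = p$, so one of the two sets is a singleton; interchanging $L$ and $R$ via the isomorphism of Theorem~\ref{thm:iso} if necessary, I may assume $R = \{r\}$ and $|L| = p$. I then read off what the $2$S-Cayley property says here: condition (3) holds automatically because $RR^{-1} = \{e\}$; condition (2) says that no conjugate of $r$ lies in $L$, i.e. $L\cap\mathcal{C} = \varnothing$ with $\mathcal{C} = r^{G}$, which is already part of (b); and condition (1), after right multiplication by $r$, reads $L^{-1}gr^{2} = Lg$ for all $g$. Taking $g = e$ gives the key relation $L = L^{-1}r^{2}$ (so $r^{2} = \ell_{1}\ell_{2}\in\langle L\rangle$ for suitable $\ell_{i}\in L$), while general $g$ gives $L^{-1}c = L$ for every conjugate $c$ of $r^{2}$; both relations are used below.

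Next I dispose of the situations that force a Cayley graph. If $r\in Z(G)$ then $\ell^{-1}xr = (r^{-1}\ell)^{-1}x$, so $\Gamma = \Cay(G, r^{-1}L)$; the relation $L = L^{-1}r^{2}$ shows $r^{-1}L$ is inverse-closed and condition (2) shows $e\notin r^{-1}L$, so this is a genuine Cayley graph. As every element of an abelian group is central, this already settles the abelian part of the theorem. Independently, if $p = 2$ then $\Gamma$ is a connected $2$-regular graph, hence a cycle, and every cycle is a Cayley graph; thus $\Gamma$ non-Cayley forces $p$ odd. This leaves the case $r\notin Z(G)$, and since the conjugates of a non-central element are non-central, conclusion (a) holds.

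It remains to prove that a non-Cayley $\Gamma$ has $G = \langle L\rangle$. A walk from $e$ multiplies on the left by elements of $L\cup L^{-1}$ and on the right by powers of $r$, the net exponent of $r$ having the same parity as the left-hand word-length; hence the component of $e$ lies in $\langle L\rangle\langle r\rangle$, and connectivity gives $G = \langle L\rangle\langle r\rangle$. Since $r^{2}\in\langle L\rangle$, the subgroup $H := \langle L\rangle$ has index $1$ or $2$. If the index is $1$ then (a) and (b) hold and we are done. If the index is $2$ then $\Gamma$ is bipartite with parts $H$ and $Hr$ (as $\ell^{-1}Hr\subseteq Hr$ and $\ell^{-1}Hr\cdot r = Hr^{2} = H$), and I claim it is a Cayley graph, so that this case never yields a non-Cayley example. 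Here I produce a regular subgroup of $\Aut(\Gamma)$ of order $|G|$: for $h\in H$ the map $\alpha_{h}$ equal to right multiplication by $h$ on $H$ and by $h^{r}$ on $Hr$ is an automorphism (a direct check, since $\ell^{-1}(xh)r = (\ell^{-1}xr)h^{r}$), and these form a semiregular copy of $H$ with orbits $H, Hr$; the right multiplication $\rho_{r}$ swaps the two parts and is an automorphism precisely because of the relation $L^{-1}c = L$ for $c\in(r^{2})^{G}$. Then $\langle\alpha_{h}\ (h\in H),\ \rho_{r}\rangle$ is transitive of order $2|H| = |G|$, hence regular, and $\Gamma$ is a Cayley graph. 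Therefore non-Cayley forces index $1$, i.e. $G = \langle L\rangle$, completing (b).

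Finally, the ``moreover'' clause. The abelian case is already covered. Suppose $G$ is dihedral of twice-odd order $2n$ and, for contradiction, that $\Gamma$ is not a Cayley graph; by the dichotomy just established we have $R = \{r\}$ with $r$ non-central, $|L| = p$ odd, $G = \langle L\rangle$ and $L\cap\mathcal{C} = \varnothing$. Since $Z(G) = 1$ and, $n$ being odd, all reflections form a single conjugacy class, $r$ cannot be a reflection: otherwise $\mathcal{C}$ would be the set of all reflections and $L\cap\mathcal{C} = \varnothing$ would place $L$ inside the rotation subgroup $\langle a\rangle$, contradicting $\langle L\rangle = G$. Hence $r = a^{i}$ is a non-trivial rotation and $C_G(r) = \langle a\rangle$, and the main obstacle I anticipate is to show directly that such a $\Gamma$ is nonetheless a Cayley graph, yielding the desired contradiction. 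I would do this by the same strategy as the index-$2$ case: right multiplications by $\langle a\rangle$ give a semiregular automorphism group whose orbits are the rotations and the reflections, and I must adjoin one part-swapping automorphism. Using that $n$ is odd together with the relation $L = L^{-1}r^{2}$ — which makes the rotations in $L$ symmetric under $k\mapsto 2i-k$ and the reflections in $L$ closed under $j\mapsto j+2i$ — I would exhibit such an automorphism and conclude that $\Gamma$ is Cayley; the smallest instance $G = S_{3}$, where $\Gamma\cong K_{3,3}$, illustrates the phenomenon.
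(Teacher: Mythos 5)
Your reduction to $R=\{r\}$, $|L|=p$ via Theorems~\ref{thm:2scsimple} and~\ref{thm:iso}, the extraction of the three conditions, the central/abelian and $p=2$ cases, and the derivation of $G=\langle L\rangle\langle r\rangle$ with $r^{2}\in H:=\langle L\rangle$ all match the paper's proof and are correct. The genuine gap is in your index-$2$ argument. Your maps $\alpha_{h}$ and $\rho_{r}$ are indeed automorphisms (in fact $\rho_{r}\in\Aut(\Gamma)$ for free once $R$ is a singleton, since $(x,\ell^{-1}xr)\mapsto(xr,\ell^{-1}(xr)r)$; the relation $L^{-1}c=L$ plays no role there, so you have placed the ``key relation'' where it is not needed). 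What is unproved is the assertion that $K:=\langle\alpha_{h}\,(h\in H),\rho_{r}\rangle$ has order $2|H|$, ``hence regular''. Writing $A=\{\alpha_{h}:h\in H\}$, one has $\rho_{r}^{2}=\alpha_{r^{2}}\in A$, so $|K|=2|H|$ if and only if $\rho_{r}$ normalizes $A$. A direct computation gives that $\rho_{r}^{-1}\alpha_{h}\rho_{r}$ acts on $H$ as right multiplication by $h^{r^{2}}$ and on $Hr$ as right multiplication by $h^{r}$, and this equals some $\alpha_{h'}$ if and only if $h^{r^{2}}=h$. So your order claim is equivalent to $r^{2}\in C_{G}(H)$, which you never establish; if it fails, then $\rho_{r}^{-1}\alpha_{h}\rho_{r}\alpha_{h^{r^{2}}}^{-1}$ is a nontrivial element of $K$ fixing every vertex of $H$, so $K$ is transitive but not regular and Sabidussi's criterion does not apply. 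The missing fact is true under the full hypotheses, but only because of connectivity: condition (1) makes $L$ invariant under right multiplication by the normal subgroup $T$ generated by all $c^{-1}c'$ with $c,c'$ conjugate to $r^{2}$, so $|T|\in\{1,p\}$; if $|T|=1$ then $r^{2}\in Z(G)$, while if $|T|=p$ then $L$ is a single coset $xT$ and a parity argument modulo $T$ shows connectivity forces $H=T$, which is abelian. Without connectivity it genuinely fails: for $G=\langle t,r\mid t^{7}=r^{6}=e,\ r^{-1}tr=t^{3}\rangle$, $L=r^{4}\langle t\rangle$, $R=\{r\}$, the pair $(L,R)$ has the $2$S-Cayley property, $[G:H]=2$, yet $r^{2}\notin C_{G}(H)$. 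The paper sidesteps all of this with a different device: the explicit bijection $\theta:h\mapsto(h,0),\ hr\mapsto(h,1)$, which exhibits $\Gamma$ as (isomorphic to) a Cayley graph of the group $H\times\mathbb{Z}_{2}$, not of $G$.

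The second gap is that the ``moreover'' clause for dihedral groups is not actually proved. Your observations that $Z(G)=1$, that $r$ cannot be a reflection, and hence that $r$ is a nontrivial rotation, are correct and agree with the paper; but from there you only describe what you \emph{would} do (``I would exhibit such an automorphism and conclude''), and the plan you outline runs into exactly the same regularity issue as above, since adjoining a part-swapping map to the semiregular group of right multiplications by $\langle a\rangle=C_{G}(r)$ again requires a normalization argument. The paper's proof of this case is structural rather than automorphism-theoretic: from $L^{-1}r^{\pm2}=L$ it deduces that $L$ is a union of cosets of $\langle r\rangle$, hence $|r|$ divides $|L|=p$, forcing $|r|=p$, $L=b\langle r\rangle$ and $G=\langle L\rangle=D_{2p}$, and then a direct check gives $\Gamma=\Cay(G,L)$, the desired contradiction. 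You need either to carry out your automorphism construction in full (including the regularity verification) or to follow an argument of this structural kind.
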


\begin{problem}\label{q1}
Find other natural classes of two-sided Cayley graphs that 
can be guaranteed to be Cayley graphs.
\end{problem}

\subsection*{Symmetry of two-sided Cayley graphs}

Example~\ref{ex2} demonstrates that not all two-sided 
Cayley graphs are Cayley graphs. We next give sufficient 
conditions on $L, R$ for $\2sc(G;L,R)$ to be a Cayley graph for $G$. 

It has been known since at least 1958 that a simple undirected graph is a 
Cayley graph if and only if its automorphism group has a subgroup 
acting regularly on the vertices (see Sabidussi \cite[Lemma 4]{MR0097068}). 
A permutation group is \emph{regular} if it is transitive, and only the 
identity fixes a point. For any group $G$, the 
\emph{right regular representation} and \emph{left regular representation}
give two regular subgroups of $\Sym(G)$, each isomorphic to $G$, namely
$G_R=\{\lambda_{e,g}|g\in G\}$ and $G_L=\{\lambda_{g,e}|g\in G\}$.

\begin{theorem}\label{twosidedisCayley}
Let  $L, R$ be subsets of a group $G$ with the $2$S-Cayley property, 
and let $\Gamma=\2sc(G;L,R)$.   
\begin{enumerate}
\item[(a)]  $G_R\leq\Aut(\Ga)$ if and only if $L^{-1}gR=L^{-1}Rg$ for each $g\in G$; here $\Ga=\Cay(G,L^{-1}R)$.
\item[(b)]  $G_L\leq\Aut(\Ga)$ if and only if $L^{-1}gR=gL^{-1}R$ for each $g\in G$; here $\Ga\cong\Cay(G,R^{-1}L)$.
\end{enumerate}
\end{theorem}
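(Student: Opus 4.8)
The plan is to prove each of the two equivalences in Theorem~\ref{twosidedisCayley} by unwinding what it means for the right-regular representation $G_R$ (respectively the left-regular representation $G_L$) to consist of graph automorphisms, and then matching the resulting adjacency condition against the adjacency in an ordinary Cayley graph.

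First I would fix notation for the arc relation: by Definition~\ref{def:2sc}, $(x,y)$ is an arc of $\Gamma=\2sc(G;L,R)$ exactly when $y\in L^{-1}xR$, equivalently when $x^{-1}y\in x^{-1}L^{-1}xR=(L^{-1})^{x^{-1}}R$ — but it will be cleaner to keep the defining form $y\in L^{-1}xR$ throughout. For part (a), a permutation $\rho_g=\lambda_{e,g}:x\mapsto xg$ lies in $\Aut(\Gamma)$ precisely when it preserves arcs, that is, $(x,y)$ an arc $\iff$ $(xg,yg)$ an arc, for all $x,y,g$. I would translate this: $(xg,yg)$ is an arc means $yg\in L^{-1}(xg)R$, i.e. $y\in L^{-1}xgRg^{-1}=L^{-1}x\,R^{g^{-1}}$. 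Since this must agree with the original condition $y\in L^{-1}xR$ for every $x$, the requirement becomes $L^{-1}xR=L^{-1}x\,R^{g^{-1}}$ for all $x,g$. The plan is to show this family of set-equalities is equivalent to the single stated condition $L^{-1}gR=L^{-1}Rg$ for all $g\in G$. Intuitively, $G_R\le\Aut(\Gamma)$ forces the right-multiplication action to look the same as it would for a Cayley graph, and the condition $L^{-1}gR=L^{-1}Rg$ says exactly that the arc set, viewed from each vertex, can be described by a \emph{left}-invariant connection set $L^{-1}R$; I would then verify directly that under this condition $y\in L^{-1}xR=L^{-1}Rx$ is equivalent to $yx^{-1}\in L^{-1}R$, which is the arc condition of $\Cay(G,L^{-1}R)$, giving the stated identification $\Gamma=\Cay(G,L^{-1}R)$. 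Here I would invoke Theorem~\ref{thm:2scsimple} to guarantee that $\Gamma$ is already a simple undirected graph, so that $L^{-1}R$ automatically meets the Cayley-graph hypotheses ($e\notin L^{-1}R$ and inverse-closure), and I would note the connection-set description makes $G_R$ act regularly, recovering the Cayley structure via Sabidussi's criterion cited above.

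For part (b) the argument is symmetric but uses left multiplication: $\lambda_{g,e}:x\mapsto g^{-1}x$ preserves arcs iff $(g^{-1}x,g^{-1}y)$ is an arc whenever $(x,y)$ is, which unwinds to $g^{-1}y\in L^{-1}g^{-1}xR$, i.e. $y\in gL^{-1}g^{-1}xR=(L^{-1})^{g^{-1}}xR$; setting this equal to $L^{-1}xR$ for all $x$ yields the condition $L^{-1}gR=gL^{-1}R$ after re-parametrising $x$. I would then check that under this condition the arc relation $y\in L^{-1}xR=xL^{-1}R$ rearranges to $x^{-1}y\in L^{-1}R$, and, since adjacency is symmetric, to $(x^{-1}y)^{-1}=y^{-1}x\in R^{-1}L$, which is the Cayley adjacency for $\Cay(G,R^{-1}L)$ under the correspondence $x\mapsto x^{-1}$ (the inversion map), explaining why part (b) gives an isomorphism $\Gamma\cong\Cay(G,R^{-1}L)$ rather than an equality. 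I expect the main obstacle to be the bookkeeping in passing from the \emph{universally quantified} family of set-equalities ``$L^{-1}xR=L^{-1}x R^{g^{-1}}$ for all $x,g$'' to the cleaner single condition stated in the theorem: one must argue carefully that quantifying over all $x$ collapses correctly, and one must be scrupulous about which side the conjugation acts on and about the distinction between an equality of graphs and a mere isomorphism (via inversion) in the two parts. Throughout I would lean on the $2$S-Cayley property — in particular conditions (2) and (3) of Definition~\ref{def:2scprop} — only to the extent needed to ensure $L^{-1}R$ (resp.\ $R^{-1}L$) is a legitimate Cayley connection set, so that the final identifications $\Gamma=\Cay(G,L^{-1}R)$ and $\Gamma\cong\Cay(G,R^{-1}L)$ are genuine simple undirected Cayley graphs.
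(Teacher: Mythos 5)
Your proposal is correct and takes essentially the same route as the paper: both arguments unwind the requirement that right (resp.\ left) translations preserve arcs into the set equalities $L^{-1}gR=L^{-1}Rg$ (resp.\ $L^{-1}gR=gL^{-1}R$) --- your specialisation $x=e$ of the quantified family of equalities is exactly the paper's transport of the neighbourhood of $e$ by $\lambda_{e,g}$, and the reverse collapse is its forward verification. The identification $\Gamma=\Cay(G,L^{-1}R)$ via neighbourhoods (with undirectedness supplying inverse-closure of the connection set) and the use of inversion $g\mapsto g^{-1}$ for $\Gamma\cong\Cay(G,R^{-1}L)$ in part (b) likewise match the paper's proof.
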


\begin{remark}\label{rem:isCayley}{\rm 
(a) Theorem~\ref{twosidedisCayley} is proved in 
Section~\ref{sec:cay}, and we derive from it, in Proposition~\ref{twosidedisCayley1},
two sufficient conditions for $\2sc(G;L,R)$ to be a Cayley graph 
for $G$ in terms of certain factorisations of the group $G$.

\medskip\noindent (b) The conditions given in Theorem~\ref{twosidedisCayley}
are far from necessary. We give in Example~\ref{ex4} a family of examples of 
two-sided Cayey graphs, all of which are Cayley graphs, where none of the
conditions of Theorem~\ref{twosidedisCayley} or of
Proposition~\ref{twosidedisCayley1} hold. 

\medskip\noindent (c) A more general study of the permutations 
$\lambda_{x,y}$ which act as automorphisms of a two-sided Cayley graph
leads to a sufficient condition for such a graph to be 
vertex-transitive, namely that $G$ factorises as $G=N_G(L)N_G(R)$ (Proposition~\ref{vt}). 
Studying the subclass of two-sided Cayley graphs $\Gamma=\2sc(G;L,R)$ 
which are guaranteed to be vertex-transitive by the condition
 $G=N_G(L)N_G(R)$ may be profitable. In particular the $2S$-Cayley 
property becomes much simpler (Proposition~\ref{simple}).  
However we do not know if all such graphs are Cayley graphs,
or if there are vertex-transitive two-sided Cayley graphs 
for which $G\ne N_G(L)N_G(R)$.
}
\end{remark}

\begin{problem}\label{prob:simple}
Determine whether or not all two-sided Cayley graphs
$\Gamma=\2sc(G;L,R)$ for which  $G=N_G(L)N_G(R)$
are Cayley graphs.
\end{problem}

\begin{problem}\label{prob:nas}
Find necessary and sufficient conditions on a two-sided Cayley graph 
to be vertex-transitive. 
\end{problem}


\section{Examples of two-sided Cayley graphs}\label{sec:ex}

In this section we give several examples of two-sided Cayley graphs 
which exhibit various interesting properties. 
First we give an example of a two-sided Cayley graph which is a 
simple graph but for which the defining subsets $L, R$ are not inverse-closed.

\begin{example}\label{ex0}{\rm
Let $G$ be the dihedral group $\langle a,b\ |\ a^6=b^2=e,bab=a^{-1}\rangle$ of
order $12$, and let $L=\{a,a^2\}$ and $R=\{b, a^3b\}$. We note that $L$ is 
not inverse-closed, but $R=R^{-1}$. We check below that 
$(L,R)$ has the 2S-Cayley property, and so by Theorem~\ref{thm:2scsimple}, 
$\Ga=\2sc(G;L,R)$ is a simple undirected two-sided Cayley graph of valency $4$. 
Also the condition of Theorem~\ref{twosidedisCayley}~(b) holds so the 
left multiplication action of $G_L$ is a subgroup of automorphisms, proving
that  $\Ga$ is a Cayley graph, in fact $\Ga=\Cay(G,LR)$.
Also $\Ga$ is the lexicographic product $C_6[2.K_1]$ of a cycle 
of length $6$ and two isolated vertices $2.K_1$. 
}
\end{example}

\begin{proof}{
We verify the condition of Definition~\ref{def:2scprop}
for $\Ga=\2sc(G;L,R)$. Note that $R=R^{-1}$. For $g=a^ib$ we have $gR=\{a^i,a^{i+3}\}$
and so $L^{-1}gR=LgR^{-1}=\{a^j | j=1,2,4,5\}$. Similarly for $g=a^i$, 
$L^{-1}gR=LgR^{-1}=\{a^jb \,|\, j=1,2,4,5\}$. This proves condition (1). 
Next we note that, for each $g\in G$, $L^g$ equals $L$ or $L^{-1}$ and each 
is disjoint from $R$, so (2) holds. Finally $L L^{-1}=\{e,a,a^5\}$ is $G$-invariant
and meets $R R^{-1}=\{e, a^3\}$ in $\{e\}$, so (3) holds. 
}
\end{proof}

Next we give the Cayley graph 
example referred to in Remark~\ref{rem:2sc}(a).

\begin{example}\label{ex1}{\rm
Let $G=\Sym(3)$, $L=\{e,(12)\}$ and $R=\{(123),(132)\}$. It is easy 
to check that $(L,R)$ satisfies conditions $(1)-(3)$ of Definition~\ref{def:2scprop}, 
so $(L,R)$ has the 2S-Cayley property, and by Theorem~\ref{thm:2scsimple}, 
$\Ga:=\2sc(G;L,R)$ is a simple undirected two-sided Cayley graph of valency 
$4$. A straightforward computation verifies that $\Ga$  is isomorphic to the 
complete graph $K_6$ with the edges of the following matching removed: 
$\{e,(12)\}, \{(13),(123)\}, \{(23),(132)\}$. Also $\Ga$ is the Cayley graph 
$\Cay(G,LR)$ (see Theorem~\ref{twosidedisCayley} and Remark~\ref{rem:isCayley}(a)). 
This representation of $\Cay(G,LR)$ as a two-sided Cayley graph is in addition to 
the one in Remark~\ref{rem:2sc}, namely $\2sc(G;LR,\{e\})$. Note also that $\Ga
\cong\2sc(G;R,L)=\Cay(G,RL)$, see  Theorem~\ref{twosidedisCayley}.
}
\end{example}

Now we present a two-sided Cayley graph with non-isomorphic connected components, 
promised in Remark~\ref{rem:2sc}(b). 

\begin{example}\label{ex2}{\rm
Let $G$ be the dihedral group $\langle a,b\ |\ a^6=b^2=e,bab=a^{-1}\rangle$ of
order $12$, and let $L=\{ab,a^3,e\}$ and $R=\{b\}$. It is easy to check that 
$(L,R)$ has the 2S-Cayley property, and so by Theorem~\ref{thm:2scsimple}, 
$\2sc(G;L,R)$ is a simple undirected two-sided Cayley graph of valency $3$. 
In addition, $G\ne \langle  L\rangle\,\langle R\rangle$, so  $\2sc(G;L,R)$ 
is disconnected,  by Theorem~\ref{thm:conn}. In fact  $\2sc(G;L,R)$ has two 
non-isomorphic components of orders 4 and 8, as shown in Figure~\ref{pic1}, 
and in particular this graph is not vertex-transitive.
}
\end{example}

\begin{figure}
\begin{center}
\includegraphics[height=3cm]{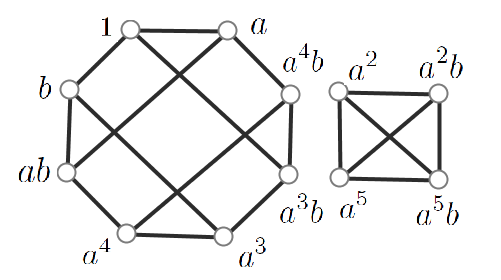}
\caption{Disconnected two-sided Cayley graph with non-isomorphic components}
\label{pic1}
\end{center}
\end{figure}

The next family of examples demonstrates that disconnected two-sided Cayley graphs
$\2sc(G;L,R)$ exist with  $G= \langle  L\rangle\,\langle R\rangle$ 
(see Remark~\ref{rem:conn}).

\begin{example}\label{ex3}{\rm
Let $G$ be the dihedral group $\langle a,b\ |\ a^n=b^2=e,bab=a^{-1}\rangle$ of
order $2n$, and let $L=\{b\}$ and $R=\{a,a^{-1}\}$. It is easy to check that 
$(L,R)$ has the 2S-Cayley property, and so by Theorem~\ref{thm:2scsimple}, 
$\Ga:=\2sc(G;L,R)$ is a simple undirected two-sided Cayley graph of valency $2$,
so each connected component of $\Ga$ is a cycle. 
An easy computation confirms that if $n$ is odd then $\Gamma = C_{2n}$ is connected, 
while if $n$ is even then $\Gamma=2.C_n$ has two connected components.

We now see how this confirms the conclusions of Theorem~\ref{thm:conn}.
For all $n$, $G=\langle L\rangle \,\langle R\rangle$.  Let $w,w'$ be words 
in $L, R$ respectively such that $ww'=e$. In particular $ww'\in\langle a\rangle$,
and hence $w$ evaluates to $e$ in $G$, so $|w|$ is even. Thus also $w'=e$ in $G$, and there exists
such a word $w'$ of odd length if and only if $n$ is odd, so by 
Theorem~\ref{thm:conn}, $\Ga$ is connected if and only if $n$ is odd.
Suppose that $n$ is even, and note that the identity $e$ is joined to $ba$ and 
$ba^{-1}$. In this case the two components of $\Ga$ are as described in 
Remark~\ref{rem:conn}, for example, the component $\mathcal{C}_0$ containing $e$
consists of the elements $a^{2i}, ba^{2i-1}$ for all $i$.   
}
\end{example}

Finally we give the examples referred to in Remark~\ref{rem:isCayley}~(b).
To construct these graphs we use groups with at least one non-normal 
subgroup. A \emph{Hamiltonian group} is a non-abelian group in which all subgroups are 
normal. Richard Dedekind investigated finite Hamiltonian groups in 1895, and came
close to a classification of them  \cite{MR1510943}.
The classification was completed by Baer \cite{MR1556916}. They are
precisely groups of the form $Q_8\times P$, where $Q_8$ is the quaternion group of 
order $8$ and $P$ is any abelian group containing no element of order $4$.
Dedekind named these groups in honour of William Hamilton, the discoverer 
of quaternions, and groups in which all subgroups are normal (that is,  
abelian or Hamiltonian groups) are sometimes called \emph{Dedekind groups}, or \emph{quasi-Hamiltonian 
groups}. These are the only groups we need to avoid in Example~\ref{ex4}.

\begin{example}\label{ex4}{\rm
Let $G_1$ and $G_2$ be non-abelian, non-Hamiltonian groups 
with identity elements $e_1$ and $e_2$, and set $G=G_1\times G_2$. For $i\in \{1,2\}$, let $H_i$ be a 
non-normal subgroup of $G_i$, and set $H_i^\#:= H_i\setminus\{e_i\}$. Let
$\Gamma=\2sc(G;L,R)$, where $L=H_1^\#\times \{e_2\}$ and $R=\{e_1\}\times H_2^\#$. 
}
\end{example}

\begin{lemma}\label{lem:ex4}
Let $G, L, R, \Ga$ be as in Example~$\ref{ex4}$. Then 
\begin{enumerate}
 \item[(a)] $(L,R)$ has the $2S$-Cayley property,
but does not satisfy any of the properties on $L, R$ in Theorem~$\ref{twosidedisCayley}$
or Proposition~$\ref{twosidedisCayley1}$.
\item[(b)]  The map $\vp:(g_1,g_2)\rightarrow (g_1,g_2^{-1})$ induces an isomorphism from $\Ga$ to 
$\Cay(G, LR)$.   
\end{enumerate}
\end{lemma}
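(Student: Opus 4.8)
The plan is to verify Lemma~\ref{lem:ex4} in two stages, treating parts (a) and (b) separately and exploiting the product structure $G=G_1\times G_2$ throughout. The key simplification is that $L=H_1^\#\times\{e_2\}$ lives entirely in the first factor and $R=\{e_1\}\times H_2^\#$ entirely in the second, so every product $\ell^{-1}gr$ with $\ell\in L$, $r\in R$ acts coordinatewise: writing $g=(g_1,g_2)$, $\ell=(h_1,e_2)$, $r=(e_1,h_2)$, we get $\ell^{-1}gr=(h_1^{-1}g_1,\,g_2h_2)$. This coordinate-splitting is what makes all the computations tractable.

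For part (a) I would first check the $2$S-Cayley property (Definition~\ref{def:2scprop}) directly. Since $L$ and $R$ commute elementwise (they lie in different direct factors), condition (1) reduces to checking $L^{-1}gR=LgR^{-1}$; here $L^{-1}=H_1^\#\times\{e_2\}=H_1^{-1}{}^\#\times\{e_2\}$ because $H_1$ is a subgroup and so inverse-closed, hence $L^{-1}=L$, and likewise $R^{-1}=R$, so (1) holds trivially. For (2), $L^g$ is contained in $G_1\times\{e_2\}$ while $R\subseteq\{e_1\}\times G_2$, and their intersection can only contain $(e_1,e_2)$; but $e\notin L$ and $e\notin R$ since we deleted identities, so $L^g\cap R=\varnothing$. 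For (3), $LL^{-1}\subseteq H_1\times\{e_2\}$ and $RR^{-1}\subseteq\{e_1\}\times H_2$, so $(LL^{-1})^g\cap(RR^{-1})\subseteq(G_1\times\{e_2\})\cap(\{e_1\}\times G_2)=\{e\}$, giving (3). Then I would show the stated negative assertion: because $H_i$ is \emph{non-normal} in $G_i$, there is some conjugate moving $H_i$, and this is exactly what breaks the factorisation-type identities $L^{-1}gR=L^{-1}Rg$, $L^{-1}gR=gL^{-1}R$ of Theorem~\ref{twosidedisCayley}, as well as the conditions of Proposition~\ref{twosidedisCayley1}. I expect this to require pinning down a specific $g$ witnessing non-commutation, using that both $G_1,G_2$ are non-abelian (so such $g$ exists in each factor).

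For part (b) the natural approach is to compute directly how $\vp$ transports arcs. An arc $x\to y$ in $\Ga$ means $y=\ell^{-1}xr=(h_1^{-1}x_1,\,x_2h_2)$ for some $h_1\in H_1^\#$, $h_2\in H_2^\#$. Applying $\vp$, we have $\vp(x)=(x_1,x_2^{-1})$ and $\vp(y)=(h_1^{-1}x_1,\,(x_2h_2)^{-1})=(h_1^{-1}x_1,\,h_2^{-1}x_2^{-1})$. Thus $\vp(y)=s\,\vp(x)$ where $s=(h_1^{-1},h_2^{-1})$ ranges exactly over $(H_1^\#)^{-1}\times(H_2^\#)^{-1}=H_1^\#\times H_2^\#$ as $(h_1,h_2)$ ranges over $L\times R$ in the obvious identification. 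The point is that $LR=H_1^\#\times H_2^\#$, so the connection set becomes a genuine left-translation connection set, which means $\vp$ carries $\Ga$ onto $\Cay(G,LR)$ in the convention of the paper where $(u,v)$ is an arc iff $uv^{-1}\in S$. I would finish by confirming $\vp$ is a bijection (it is the coordinate map inverting the second factor) and that it preserves adjacency in both directions, so it is a graph isomorphism.

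The main obstacle will be the negative half of part (a): showing that \emph{none} of the sufficient conditions of Theorem~\ref{twosidedisCayley} or Proposition~\ref{twosidedisCayley1} hold. Verifying the positive $2$S-Cayley property and the isomorphism in (b) are essentially bookkeeping once the coordinatewise action is in hand, but ruling out the Cayley-detecting conditions requires locating explicit group elements that witness the failure of each factorisation identity, and tying these failures precisely to the non-normality of $H_1$ in $G_1$ and of $H_2$ in $G_2$. I would organize this by picking $g_1\in G_1$ with $H_1^{g_1}\ne H_1$ and $g_2\in G_2$ with $H_2^{g_2}\ne H_2$ and then evaluating the two sides of each identity at $g=(g_1,g_2)$ to exhibit a concrete mismatch.
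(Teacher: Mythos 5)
Your proposal is correct and follows essentially the same route as the paper: inverse-closedness of $L,R$ gives Definition~\ref{def:2scprop}(1), the containments $L\subset G_1\times\{e_2\}$ and $R\subset\{e_1\}\times G_2$ give conditions (2) and (3), non-normality of the $H_i$ (via elements $g_i\notin N_{G_i}(H_i)$) defeats the conditions of Theorem~\ref{twosidedisCayley} and of Proposition~\ref{twosidedisCayley1}, and $\vp$ is checked to carry arcs to arcs in both directions, exactly as in the paper's proof. The only cosmetic difference is that you use a single witness $g=(g_1,g_2)$ to break all four Cayley-detecting conditions, whereas the paper uses $(e_1,g_2)$ and $(g_1,e_2)$ separately for Theorem~\ref{twosidedisCayley} and then computes the normalizer factors $N_{G_1}(H_1)\times N_{G_2}(H_2)$ and $G_1\times N_{G_2}(H_2)$ explicitly to rule out $(\delta)$ and $(\delta')$; both variants work.
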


\begin{proof}{
The sets $L, R$ are inverse-closed, so Definition~\ref{def:2scprop}(1) holds. The other
two conditions Definition~\ref{def:2scprop}(2) and~(3) hold because $L\subset G_1
\times\{e_2\}$ while $R\subset\{e_1\}\times G_2$. Thus $(L,R)$ has the $2S$-Cayley property.

If $g=(e_1,g_2)$ with $g_2\in G_2\setminus N_{G_2}(H_2)$, then $L^{-1}gR\ne L^{-1}Rg$ (such an
element $g_2$ exists since $H_1$ is not normal in $G_1$).
Similarly if $g=(g_1,e_2)$ with $g_1\in G_1\setminus N_{G_1}(H_1)$, then $L^{-1}gR\ne gL^{-1}R$,
so neither condition of Theorem~$\ref{twosidedisCayley}$ holds. 
The factors on the left hand side of condition $(\delta)$ in Proposition~\ref{twosidedisCayley1}
are $N_{G_1}(H_1)\times N_{G_2}(H_2)$ and $G_1\times N_{G_2}(H_2)$, and their product is not equal to $G$.
Similarly condition $(\delta')$ does not hold. This proves part (a).

Each arc of $\Ga$ is of the form $((x,y), (\ell^{-1}x,yr))$,
for some $(\ell,e_2)\in L, (e_1,r)\in R$. This is mapped by $\vp$ to the pair $((x,y^{-1}),
(\ell^{-1}x,r^{-1}y^{-1}))$, which is an arc of $\Cay(G, LR)$, and conversely each arc of $\Cay(G, LR)$
is the image under $\vp$ of an arc of $\Ga$. 
}
\end{proof}

\section{Basic properties of two-sided Cayley graphs}\label{sec:props}

In this section we prove Theorems~\ref{thm:2scsimple}, and~\ref{thm:conn}. 
We begin by showing the significance of each of
the conditions in Definition~\ref{def:2scprop}. Recall the definition of $\lambda_{\ell,r}$ 
given before Definition~\ref{def:2sc}. We will say that $\Ga=\2sc(G;L,R)$ contains
\emph{multiple arcs} if, for some $x,y\in G$, there are at least two arcs of $\Ga$ from $x$ to $y$. 

\begin{lemma}\label{lem:2scsimple}
Let $L, R$ be nonempty subsets of a group $G$, and let $\Ga=\2sc(G;L,R)$.
\begin{itemize}
\item[(a)] Definition~$\ref{def:2scprop}(1)$ holds if and only if the adjacency 
relation of $\Ga$ is symmetric. 
\item[(b)] Definition~$\ref{def:2scprop}(2)$ holds if and only if 
each $\lambda_{\ell,r}$ (for $\ell\in L, r\in R$) is a derangement, and this 
holds if and only if $\Ga$ has no loops.
\item[(c)] Definition~$\ref{def:2scprop}(3)$ holds if and only if $\Ga$ has no multiple arcs.
\end{itemize} 
In particular the conclusion of Theorem~$\ref{thm:2scsimple}$ is valid. 
\end{lemma}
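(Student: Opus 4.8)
The plan is to prove parts (a), (b), (c) separately by translating each combinatorial feature of $\Ga$ into a condition on $L$ and $R$, and then to assemble Theorem~\ref{thm:2scsimple} from them. Throughout I would use the basic reformulation that there is an arc from $x$ to $y$ precisely when $y=\ell^{-1}xr$ for some $(\ell,r)\in L\times R$; rewriting $y=\ell^{-1}xr$ as $x=\ell y r^{-1}$, this says equivalently that $y\in L^{-1}xR$, or again that $x\in LyR^{-1}$. For part (a) I would fix $y\in G$ and compare the statements ``there is an arc from $x$ to $y$'' and ``there is an arc from $y$ to $x$''. By the reformulation these read $x\in LyR^{-1}$ and $x\in L^{-1}yR$ respectively. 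Hence the adjacency relation is symmetric for all $x$ (with $y$ fixed) exactly when $LyR^{-1}=L^{-1}yR$, and letting $y$ range over $G$ this is precisely Definition~\ref{def:2scprop}(1).

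For part (b), a direct computation shows that $(g)\lambda_{\ell,r}=\ell^{-1}gr=g$ holds if and only if $r=g^{-1}\ell g=\ell^g$. Thus $\lambda_{\ell,r}$ fixes some vertex if and only if $r\in\{\ell^g\mid g\in G\}$, so every $\lambda_{\ell,r}$ (for $\ell\in L$, $r\in R$) is a derangement if and only if $\ell^g\ne r$ for all $\ell\in L$, $r\in R$ and $g\in G$, which is exactly the statement that $L^g\cap R=\varnothing$ for every $g$, i.e.\ Definition~\ref{def:2scprop}(2). Since a loop at $x$ is by definition a fixed point of some $\lambda_{\ell,r}$, the graph $\Ga$ has no loops if and only if no $\lambda_{\ell,r}$ has a fixed point, which gives the remaining equivalence in (b).

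Part (c) is where the main work lies. For a fixed vertex $x$, two pairs $(\ell_1,r_1),(\ell_2,r_2)\in L\times R$ yield the same target $(x)\lambda_{\ell_1,r_1}=(x)\lambda_{\ell_2,r_2}$ if and only if $\ell_1^{-1}xr_1=\ell_2^{-1}xr_2$, which rearranges to
\[
(\ell_2\ell_1^{-1})^x=r_2r_1^{-1}.
\]
Hence $\Ga$ has multiple arcs exactly when some $x\in G$ admits \emph{distinct} such pairs, and in that case the common value $z=(\ell_2\ell_1^{-1})^x=r_2r_1^{-1}$ lies in $(LL^{-1})^x\cap(RR^{-1})$. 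If Definition~\ref{def:2scprop}(3) holds then $z=e$, which forces $r_1=r_2$ and, since conjugation is injective, $\ell_1=\ell_2$; so the pairs coincide and $\Ga$ has no multiple arcs. Conversely, if (3) fails I would choose $g\in G$ and a nonidentity element $z\in(LL^{-1})^g\cap(RR^{-1})$, write $z=(\ell_1\ell_2^{-1})^g=r_1r_2^{-1}$ with $\ell_1,\ell_2\in L$ and $r_1,r_2\in R$, and note that $z\ne e$ forces $\ell_1\ne\ell_2$. Reading the displayed identity backwards then shows that the two distinct pairs $(\ell_1,r_1)$ and $(\ell_2,r_2)$ send $x=g$ to one and the same vertex, exhibiting a multiple arc. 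I expect this converse to be the principal obstacle, since it requires building an explicit coincident pair of arcs out of a mere failure of condition (3) and checking carefully that the two pairs are genuinely distinct yet produce the same target.

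Finally I would assemble Theorem~\ref{thm:2scsimple}: the graph $\Ga$ is a simple undirected graph precisely when its adjacency is symmetric, it has no loops, and it has no multiple arcs, which by parts (a)--(c) is equivalent to Definition~\ref{def:2scprop}(1)--(3) all holding, that is, to $(L,R)$ having the $2$S-Cayley property. For the valency, the absence of multiple arcs means that for each $x$ the map $(\ell,r)\mapsto(x)\lambda_{\ell,r}$ is injective on $L\times R$, so $x$ has exactly $|L|\cdot|R|$ out-neighbours; as the graph is undirected this number is its valency.
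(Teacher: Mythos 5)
Your proposal is correct and takes essentially the same approach as the paper: the same pointwise translations in parts (a) and (b), the same pivotal identity $(\ell_2\ell_1^{-1})^x=r_2r_1^{-1}$ driving both directions of part (c), and the same injectivity argument yielding the valency $|L|\cdot|R|$. If anything, your write-up makes explicit that part (c) needs no appeal to condition (1) of Definition~\ref{def:2scprop}, a hypothesis the paper nominally assumes at the start of its part (c) but never actually uses.
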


\begin{proof}{ (a) 
Suppose first that  the adjacency relation of $\Ga$ is symmetric. 
Then we have the following equivalent conditions on elements $x,y\in G$.

\begin{center}
\begin{tabular}{ll} 
 & $y=\ell_1^{-1} x r_1$ for some $\ell_1\in L, r_1\in R$\\
iff & $(x,y)$ is an arc of $\2sc(G;L,R)$\\
iff & $(y,x)$ is an arc of $\2sc(G;L,R)$\\
iff & $x=\ell_2^{-1} y r_2$, or equivalently $y=\ell_2 x r_2^{-1}$, for some $\ell_2\in L, r_2\in R$\\
\end{tabular}
\end{center}

\noindent
It follows that $L^{-1}xR=LxR^{-1}$ for all $x\in G$, that is,
Definition~$\ref{def:2scprop}(1)$ holds. Conversely, 
suppose that this condition holds, and that $(x,y)$ is an arc of $\2sc(G;L,R)$.
Then $y=\ell^{-1}xr$ for some  $\ell\in L, r\in R$, and since   $L^{-1}xR=LxR^{-1}$
we also have  $y=\ell_2 x r_2^{-1}$, for some $\ell_2\in L, r_2\in R$, so that $(y,x)$ is also an arc. 

(b) If $L^g\cap R\neq\varnothing$ for some $g\in G$, then $g^{-1}\ell g=r$, 
or equivalently, $(g)\lambda_{\ell,r}=\ell^{-1}gr=g$, for some  $\ell\in L, r\in R$, 
which shows that $\lambda_{\ell,r}$ fixes the vertex $g$ and
that there is a loop on $g$ in $\Ga$. Conversely existence of a loop on $g$
implies that $\ell^{-1}gr=g$, for some  $\ell\in L, r\in R$, and hence that
$g^{-1}\ell g=r\in L^g\cap R$. Similarly, if some $\lambda_{\ell,r}$ fixes a vertex 
$g$, then we have $\ell^{-1}gr=g$ and $g^{-1}\ell g=r\in L^g\cap R$.

(c)  Suppose that Definition~$\ref{def:2scprop}(1)$ holds. Assume first that
 $(L\,L^{-1})^g\cap (R\,R^{-1})\ne\{e\}$ for some $g\in G$. Then $x:=g^{-1}\ell_1\ell_2^{-1}g=r_1r_2^{-1}
\ne e$ for some $\ell_1,\ell_2\in L$ and $r_1,r_2\in R$, and hence $y:=\ell_2^{-1}g r_2
=\ell_1^{-1}g r_1$. Since $r_1r_2\ne e$, the pairs $(\ell_1,r_1), 
(\ell_2, r_2)\in L\times R$ are distinct and so there are two arcs in $\Ga$ 
from  $g$ to $y$. Conversely suppose that $\Ga$ contains multiple arcs, say two distinct
arcs from vertex $x$ to $y$. Then we have $y=\ell_i^{-1}xr_i$
for distinct pairs $(\ell_i,r_i)\in L\times R$ with $i=1,2$. Now $r_1\ne r_2$ since 
otherwise we would also have $\ell_1 = xr_1y^{-1} = xr_2y^{-1}= \ell_2$.
Hence 
\[
e\ne r_2r_1^{-1}=(x^{-1}\ell_2 y)(y^{-1}\ell_1^{-1}x)=x^{-1}(\ell_2\ell_1^{-1})x\in (L\,L^{-1})^x\cap (R\,R^{-1})
\]
so condition Definition~$\ref{def:2scprop}(3)$ fails.

Finally we prove Theorem~\ref{thm:2scsimple}. If $\Ga$ is simple and undirected, 
then Definition~$\ref{def:2scprop}$ parts (1), (2) and (3) follow from parts (a), (b) 
and (c) above, respectively, so $(L,R)$ has the $2$S-Cayley property. Conversely if
$(L,R)$ has the $2$S-Cayley property, then parts (a)--(c) imply that
$\Ga$ is simple and undirected. In this case edges from a vertex
$x$ go to the vertices $\ell^{-1}xr$ for $(\ell,r)\in (L,R)$, and if 
$\ell_1^{-1}xr_1=\ell_2^{-1}xr_2$, then $x^{-1}(\ell_2\ell_1^{-1})x
=r_2r_1^{-1}\in (L\,L^{-1})^x\cap (R\,R^{-1})$, so by condition (3),
this element is the identity, and so $r_2=r_1$ and $\ell_2=\ell_1$.
Thus each vertex of $\Ga$ is joined to exactly $|L|.|R|$ other vertices.
}
\end{proof}

Next we consider various conditions on the connection set $\hat S(L,R)$
of $\2sc(G:L,R)$ defined in Definition~\ref{def:2sc}. 

\begin{lemma}\label{lem:lambda1}
Let  $L, R$ be nonempty subsets of a group $G$. Then
\begin{enumerate}
 \item[(a)] $\lambda_{\ell,r}^{-1}=\lambda_{\ell^{-1},r^{-1}}$, for all $\ell, r\in G$, and in particular $\hat S(L,R)^{-1}=\hat S(L^{-1},R^{-1})$.
 \item[(b)] $\lambda_{\ell,r}=\lambda_{u,v}$ if and only if there exists $x\in Z(G)$ such that
$u=x\ell$ and $v=xr$.
\end{enumerate}
\end{lemma}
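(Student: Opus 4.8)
The plan is to work directly from the definition $\lambda_{\ell,r}\colon g\mapsto \ell^{-1}gr$ in both parts, reducing each claim to an identity that must hold for all $g\in G$. For part (a) I would simply compose and check. Applying $\lambda_{\ell,r}$ sends $g\mapsto \ell^{-1}gr$, and applying $\lambda_{\ell^{-1},r^{-1}}$ to the result gives $\ell(\ell^{-1}gr)r^{-1}=g$; composing in the other order returns $g$ as well, so $\lambda_{\ell,r}^{-1}=\lambda_{\ell^{-1},r^{-1}}$. The ``in particular'' statement then follows by applying this to each generator: $\hat S(L,R)^{-1}=\{\lambda_{\ell,r}^{-1}\mid \ell\in L,\,r\in R\}=\{\lambda_{\ell^{-1},r^{-1}}\mid \ell\in L,\,r\in R\}=\hat S(L^{-1},R^{-1})$.

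For part (b) the equation $\lambda_{\ell,r}=\lambda_{u,v}$ unwinds to $\ell^{-1}gr=u^{-1}gv$ for every $g\in G$, which I would rearrange to $(u\ell^{-1})\,g\,(rv^{-1})=g$ for all $g$. Writing $a=u\ell^{-1}$ and $b=rv^{-1}$, the condition reads $agb=g$ for every $g\in G$. Setting $g=e$ forces $b=a^{-1}$, and substituting back gives $aga^{-1}=g$ for all $g$, i.e.\ $a$ commutes with every element, so $a\in Z(G)$. Taking $x=a=u\ell^{-1}\in Z(G)$ yields $u=x\ell$ at once, and from $rv^{-1}=x^{-1}$ together with the centrality of $x$ one obtains $v=xr$. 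This settles the forward direction; the converse is a short substitution, since if $u=x\ell$ and $v=xr$ with $x$ central then $u^{-1}gv=\ell^{-1}x^{-1}gxr=\ell^{-1}gr$, using $x^{-1}g=gx^{-1}$.

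The only step that requires any thought is recognising that ``$agb=g$ for all $g$'' is equivalent to ``$a\in Z(G)$ and $b=a^{-1}$''; everything else is routine bookkeeping with the group operation and the definition of $\lambda_{\ell,r}$. I do not anticipate a genuine obstacle here, as both parts are direct verifications, and the centrality characterisation in (b) is just the standard fact that an element conjugating every group element trivially must be central.
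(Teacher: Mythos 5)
Your proof is correct and follows essentially the same route as the paper: part (a) by direct computation of the inverse map, and part (b) by evaluating the identity $\ell^{-1}gr=u^{-1}gv$ at $g=e$ to pin down the element $x=u\ell^{-1}=vr^{-1}$ and then using the equation for general $g$ to conclude $x\in Z(G)$. The only cosmetic difference is your intermediate normalisation $agb=g$ with $a=u\ell^{-1}$, $b=rv^{-1}$ (and note that $v=xr$ follows from $rv^{-1}=x^{-1}$ alone, without invoking centrality), which changes nothing of substance.
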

  
\begin{proof}{
(a) By definition, the inverse of $\lambda_{\ell,r}$ maps an element 
$x$ to $\ell xr^{-1}$, and hence is equal to $\lambda_{\ell^{-1},r^{-1}}$.

(b) Suppose that $u=x\ell$ and $v=xr$ where $x\in Z(G)$. Then 
$\lambda_{u,v}$ maps $g$ to $u^{-1}gv = \ell^{-1}x^{-1}gxr=
\ell^{-1}gr$ so $\lambda_{u,v}=\lambda_{\ell,r}$.
Suppose conversely that $\lambda_{u,v}=\lambda_{\ell,r}$.
Considering the action on the identity $e$, we have $\ell^{-1}r=u^{-1}v$, so
$x:=u\ell^{-1}=vr^{-1}$. Then, considering the action on 
an arbitrary $g\in G$ we have $\ell^{-1}gr=u^{-1}gv=\ell^{-1}x^{-1}gxr$,
whence $g=x^{-1}gx$ for all $g\in G$ and so $x\in Z(G)$. 
}
\end{proof}

\begin{proposition}\label{prop:derangement}
Let $L, R$ be subsets of a group $G$ such that $(L,R)$ has 
the $2$S-Cayley property, and let $D(G)$ denote the set of 
all derangements of $G$. Then the following hold.
\begin{enumerate}
 \item[$(\hat 1)$] Given $\ell\in L, r\in R$, for each $x\in G$ there exist
unique $\ell'\in L, r'\in R$ such that $\varphi:=\lambda_{\ell,r}\circ\lambda_{\ell',r'}$ fixes $x$. 
Moreover $\varphi={\rm id}$ if and only if $L\ell\cap Rr\cap Z(G)\ne\varnothing$. 
\item[$(\hat 2)$] $\hat{S}(L,R)\subseteq D(G)$;
\item[$(\hat 3)$]  $(\hat{S}(L,R)\circ \hat{S}(L,R)^{-1})\setminus \{\id\}\subseteq D(G)$. 
\end{enumerate}
\end{proposition}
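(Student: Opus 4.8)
The plan is to handle the three parts in turn, using the two preparatory lemmas. Part $(\hat 2)$ is immediate: since $(L,R)$ has the $2$S-Cayley property, condition Definition~\ref{def:2scprop}(2) holds, and by Lemma~\ref{lem:2scsimple}(b) this is exactly the assertion that every $\lambda_{\ell,r}$ with $(\ell,r)\in L\times R$ is a derangement, i.e. $\hat S(L,R)\subseteq D(G)$. The substance lies in $(\hat 1)$ and $(\hat 3)$, and for both I would first record the composite formula $\lambda_{\ell,r}\circ\lambda_{\ell',r'}=\lambda_{\ell'\ell,\,r'r}$, obtained by a one-line calculation (the map sending $x$ to $(\ell'\ell)^{-1}x(r'r)$); I would check the composition convention here, since it is what makes the product land as $\ell'\ell\in L\ell$ rather than $\ell\ell'\in\ell L$.

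For $(\hat 1)$, fix $\ell\in L$, $r\in R$ and $x\in G$. By the composite formula, $\varphi=\lambda_{\ell,r}\circ\lambda_{\ell',r'}=\lambda_{\ell'\ell,\,r'r}$ fixes $x$ if and only if $(\ell'\ell)^{-1}x(r'r)=x$, which rearranges to $(\ell')^{-1}x r'=\ell x r^{-1}$. For existence, the right-hand side lies in $LxR^{-1}$, which equals $L^{-1}xR$ by Definition~\ref{def:2scprop}(1), so it can be written as $(\ell')^{-1}x r'$ for some $\ell'\in L$, $r'\in R$. For uniqueness, two admissible pairs would give $(\ell_1')^{-1}x r_1'=(\ell_2')^{-1}x r_2'$, and the rearrangement used in Lemma~\ref{lem:2scsimple}(c) yields $x^{-1}\bigl(\ell_2'(\ell_1')^{-1}\bigr)x=r_2'(r_1')^{-1}\in (LL^{-1})^x\cap(RR^{-1})$, which is $\{e\}$ by Definition~\ref{def:2scprop}(3); hence $(\ell_1',r_1')=(\ell_2',r_2')$. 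For the ``moreover'' clause I would use Lemma~\ref{lem:lambda1}(b): $\varphi=\lambda_{\ell'\ell,\,r'r}$ equals $\id=\lambda_{e,e}$ precisely when some $z\in Z(G)$ satisfies $\ell'\ell=z=r'r$, and since $\ell'\ell\in L\ell$ and $r'r\in Rr$ this says exactly that $L\ell\cap Rr\cap Z(G)\neq\varnothing$. (Note $\varphi=\id$ is a condition on $\ell,r$ alone, independent of $x$, as it must be since the identity fixes every point.)

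For $(\hat 3)$, take a typical element $\psi=\lambda_{\ell_1,r_1}\circ\lambda_{\ell_2,r_2}^{-1}$ of $\hat S(L,R)\circ\hat S(L,R)^{-1}$, where $(\ell_i,r_i)\in L\times R$ (Lemma~\ref{lem:lambda1}(a) identifies the inverse factors). Rather than expand $\psi$, I would observe that $\psi$ fixes $x$ if and only if $\lambda_{\ell_2,r_2}^{-1}(x)=\lambda_{\ell_1,r_1}^{-1}(x)$; writing $w:=\ell_1 x r_1^{-1}$ for this common value, the equality becomes $\ell_1^{-1}wr_1=\ell_2^{-1}wr_2$, which says precisely that $w$ has two arcs to $x$ in $\2sc(G;L,R)$. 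Since $(L,R)$ has the $2$S-Cayley property, the graph has no multiple arcs by Lemma~\ref{lem:2scsimple}(c), forcing $(\ell_1,r_1)=(\ell_2,r_2)$ and hence $\psi=\id$. Contrapositively, any $\psi\in\hat S(L,R)\circ\hat S(L,R)^{-1}$ with $\psi\neq\id$ has no fixed point, so $\psi\in D(G)$, which is $(\hat 3)$.

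The algebraic rearrangements are routine; the single point needing care is the reduction in $(\hat 3)$. Expanding $\psi$ directly produces products from $L^{-1}L$ and $R^{-1}R$, which are not the sets controlled by Definition~\ref{def:2scprop}(3); recognising the fixed-point condition instead as the ``no multiple arcs'' condition at the vertex $w$ is what lets condition (3) finish the argument cleanly. I expect this recasting to be the main obstacle, with everything else being bookkeeping.
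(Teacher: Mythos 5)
Your proposal is correct and takes essentially the same route as the paper's proof: condition (1) of Definition~\ref{def:2scprop} yields existence in $(\hat 1)$, condition (3) yields uniqueness, Lemma~\ref{lem:lambda1}(b) settles the ``moreover'' clause, $(\hat 2)$ is read off from Lemma~\ref{lem:2scsimple}(b), and $(\hat 3)$ comes down to condition (3). The only cosmetic differences are your choice of composition convention (which, as you note, is harmless, since $\varphi=\id$ is equivalent to $\lambda_{\ell',r'}=\lambda_{\ell,r}^{-1}$ under either convention) and your packaging of $(\hat 3)$ through the no-multiple-arcs statement of Lemma~\ref{lem:2scsimple}(c) at the vertex $w$, where the paper carries out the same algebra directly at the fixed point.
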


\begin{proof}
{Let  $\ell\in L, r\in R$, and $x\in G$. 
First we prove that $|L^{-1}xR|=|L|\,|R|$. 
Suppose that $\ell_1^{-1}xr_1=\ell_2^{-1}xr_2$,
where the $\ell_i\in L$ and the $r_i\in R$.
Then $x^{-1}(\ell_2\ell_1^{-1})x=r_2r_1^{-1}\in (L\,L^{-1})^x\cap(R\,R^{-1})$,
which is equal to $\{e\}$ by  Definition~$\ref{def:2scprop}(3)$. 
It follows that distinct pairs $(\ell,r)$ from $L\times R$ give distinct 
elements $\ell^{-1}xr$, so $|L^{-1}xR|=|L|\,|R|$.
Now, by Definition~$\ref{def:2scprop}(1)$, $L^{-1}xR
=LxR^{-1}$, and hence there exist $\ell'\in L, r'\in R$ such
that $\ell^{-1}xr=\ell'x(r')^{-1}$. This implies that  $\varphi:=
\lambda_{\ell,r}\circ\lambda_{\ell',r'}$ fixes $x$. The uniqueness of 
$\ell', r'$ follows from the fact that $|LxR^{-1}|=|L^{-1}xR|=|L|\,|R|$.

If $\varphi= {\rm id}$ then $\lambda_{\ell',r'}=\lambda_{\ell,r}^{-1}$,
which is $\lambda_{\ell^{-1},r^{-1}}$, by Lemma~\ref{lem:lambda1}(a). Then, by 
Lemma~\ref{lem:lambda1}(b), $\ell'=z\ell^{-1}$ and $r'=zr^{-1}$, 
for some $z\in Z(G)$.  Hence $z=\ell'\ell=r'r\in L\ell\cap Rr\cap Z(G)$. 
Conversely if $z\in L\ell\cap Rr\cap Z(G)$, then there exist $\ell'\in L, 
r'\in R$ such that $z=\ell'\ell=r'r$. By Lemma~\ref{lem:lambda1}, this implies that
$\lambda_{\ell',r'}=\lambda_{\ell^{-1},r^{-1}}=\lambda_{\ell,r}^{-1}$, and hence that the map 
$\varphi:= \lambda_{\ell,r}\circ\lambda_{\ell',r'}$ is the identity map.
This proves condition $(\hat{1})$.

Condition $(\hat 2)$ follows from the condition of
Definition~$\ref{def:2scprop}(2)$ and Lemma~\ref{lem:2scsimple}(b).
We now consider condition~$(\hat 3)$. 
Supppose that $\varphi:=\lambda_{\ell_1,r_1}\circ \lambda_{\ell_2^{-1},r_2^{-1}}\neq \id$,
where the $\ell_i\in L, r_i\in R$. Suppose for a contradiction that 
$\varphi$ fixes the element $g\in G$. Then
$g=(g)\varphi = \ell_2(\ell_1^{-1}gr_1)r_2^{-1}$, and hence
$r_1r_2^{-1}=g^{-1}(\ell_1\ell_2^{-1})g\in(L\,L^{-1})^g\cap (R\,R^{-1})$, contradicting
the condition of Definition~$\ref{def:2scprop}(3)$. 
Hence $\varphi\in D(G)$. 
}
\end{proof}

It follows from Lemma~\ref{lem:2scsimple}(b) that condition 
$(\hat 2)$ above is equivalent to Definition~$\ref{def:2scprop}(2)$. 
Also the proof above shows that Definition~$\ref{def:2scprop}(3)$
implies condition~$(\hat{3})$, while parts (1) and (3) of
Definition~$\ref{def:2scprop}$ are used together to obtain condition~$(\hat{1})$.

\begin{question}
Under what conditions on $L$ and $R$ do the conditions 
$(\hat 1)$--$(\hat 3)$ imply that $(L,R)$ has 
the $2$S-Cayley property? 
\end{question}

Our next task is to prove Theorem~\ref{thm:conn}.

\medskip\noindent
\textbf{Proof of Theorem~\ref{thm:conn}.}\quad We consider  $\Gamma=\2sc(G;L,R)$, where 
$L, R$ are inverse-closed subsets of a group $G$ with the $2$S-Cayley property.
In particular, by Theorem~\ref{thm:2scsimple}, $\Ga$ is a simple 
undirected graph.

Suppose first that $\Gamma$ is connected. Then, for each $g\in G$, 
there is a path from the vertex $e$ to $g$, and hence $\ell_k^{-1}
\ell_{k-1}^{-1}\ldots \ell_1^{-1}er_1\ldots r_{k-1}r_{k}=g$,
for some $k$ and some $\ell_i\in L, r_i\in R$. Thus $g=xy$ with 
$x=\ell_k^{-1}
\ell_{k-1}^{-1}\ldots \ell_1^{-1}\in \langle L\rangle$
and $y=r_1\dots r_k\in\langle R\rangle$, and so $G=\langle L\rangle\,\langle R\rangle$. 
Choosing the element $g$ to be $\ell\in L$, we obtain an expression
$e=\ell^{-1}xy$ and here $\ell^{-1}x$ is a word in $L=L^{-1}$ of length $k+1$ and
$y$ is a word in $R$ of length $k$. Thus condition $(*)$ holds.

Next suppose that $G=\langle L\rangle\,\langle R\rangle$, and
let $g\in G$. Then we have $g=x_gy_g$, where $x_g, y_g$ are 
words in $L, R$, respectively. 
Suppose first that $(*)$ holds. Then there are words $x_e, y_e$ in $L$ and $R$ 
respectively, with lengths of opposite parity such that
$x_ey_e=e$. If necessary, by  replacing $x_g$ by $x_gx_e$ and $y_g$ by $y_ey_g$, 
we may assume that the lengths of $x_g$ and $y_g$ have the same parity.  
Moreover, by Definition~$\ref{def:2scprop}(3)$, $e\in (L\,L^{_1})\cap(R\,R^{-1})=L^2\cap R^2$, 
so we may modify $x_g$ and $y_g$ further and assume that $x_g$ and $y_g$ 
have the same length. 
These words then give a path in $\Gamma$ from $e$ to $g$. Hence $\Gamma$ is connected.

Now assume that $G=\langle L\rangle\,\langle R\rangle$ with $L, R$ inverse-closed, but that 
condition $(*)$ fails. It follows from our proof above that 
$\Ga$ is not connected. 
Let $g\in G$ be expressed as $g=x_gy_g$ as above. Using the property
 $e\in L^2\cap R^2$, we may modify $x_g, y_g$ so that either (i) $|x_g|=|y_g|$,
or (ii) $|x_g|=1+|y_g|$. Since $(*)$ fails, for each expression $e=x_ey_e$
with $x_e,y_e$ words in $L, R$ respectively, the lengths of $x_e, y_e$ 
must have the same parity. This implies that, for a given element $g$, either
each expression satisfies (i) or each expression satisfies (ii).
Note that (ii) holds for each $\ell\in L$ and each $r\in R$, 
and we have an edge from $\ell$ to $\ell^{-1}\ell r=r$.
If (i) holds for $g$ then we obtain a path in $\Ga$ from $e$ to $g$.
Suppose then that (ii) holds for $g$, and let $\ell$ be the 
last letter of $L$ in the word $x_g$. Then the expression
$g=x_gy_g$ satisfying (ii) gives a path in $\Ga$ from $\ell$ to $g$.
Thus $\Ga$ has two connected components, and they are the sets described in Remark~\ref{rem:conn}.
\hfill{$\blacksquare$}\medskip

\section{Some isomorphisms of two-sided Cayley graphs}\label{sec:iso}

In this section we prove Theorems~\ref{thm:iso} and~\ref{cor:primval}.

\medskip\noindent
\textbf{Proof of Theorem~\ref{thm:iso}.}\quad We consider  
$\Gamma=\2sc(G;L,R)$, where $L, R$ are non-empty subsets of 
a group $G$, and we are given $x,y\in G$, and $\sigma\in \Aut(G)$.
It is straightfoward to check that each of the conditions
of Definition~$\ref{def:2scprop}$ holds for one of the pairs
$(L,R), (R,L), (L^{\sigma},R^{\sigma})$, $(L^{x},R^{y})$
if and only if it holds for all of them, noting, for example,
that $(L^\sigma(L^\sigma)^{-1})^g\cap R^\sigma(R^\sigma)^{-1} = 
((L\,L^{-1})^{\sigma g\sigma^{-1}}\cap R\,R^{-1})^{\sigma}$.
This proves part (a).

Now suppose that $(L,R)$ has the $2$S-Cayley property. 
Then, for each of the graphs $\Ga'$ in Table~\ref{tbl:iso},
it is straightforward to check that the map $\varphi:G\rightarrow G$ 
in the same line of the table is an isomorphism from $\Ga$ to $\Ga'$. 
For example, in the third line, $\varphi$ maps the arc $(g,\ell^{-1}gr)$ 
of $\Ga$ to the pair $(x^{-1}gy, (\ell^x)^{-1}x^{-1}gyr^y)$ which is an arc of
$\2sc(G;L^x,R^y)$.
\hfill{$\blacksquare$}\medskip

\begin{table}
\begin{center}
\begin{tabular}{cl}
\hline
$\Ga'$ & $\varphi$\\ \hline
$\2sc(G;R,L)$ & $g\mapsto g^{-1}$\\
$\2sc(G;L^{\sigma},R^{\sigma})$ & $g\mapsto g^\sigma$\\
$\2sc(G;L^x,R^y)$ & $g\mapsto x^{-1}gy$\\ \hline                                                       
\end{tabular}
\caption{Isomorphisms in the proof of Theorem~\ref{thm:iso}}\label{tbl:iso}
\end{center}
\end{table}

Now we prove Theorem~\ref{cor:primval}, which implies that
the Petersen graph is not a two-sided Cayley graph for any group
of order 10. 

\medskip\noindent
\textbf{Proof of Theorem~\ref{cor:primval}.}\quad 
Let $\Gamma=\2sc(G;L,R)$, where $L, R$ are subsets 
of a group $G$ with the $2$S-Cayley property, 
and suppose that $\Ga$ is connected and regular of 
prime valency $p$,  
that is, each vertex lies on exactly $p$ edges.
Suppose moreover that $\Ga$ is not a Cayley graph. Then
$p$ is an odd prime, since each connected graph of
valency $2$ is a cycle, and hence a Cayley graph.  
By  Theorem~\ref{thm:2scsimple}, the valency of $\Ga$ satisfies 
$p=|L|.|R|$, and so $\{|L|,|R|\}=\{p,1\}$. 
Also, by Theorem~\ref{thm:iso}(b), $\Ga\cong\2sc(G;R,L)$,
and so, up to isomorphism, we may assume that
$|L|=p$ and $|R|=1$. Let $R=\{r\}$. 
If $r\in Z(G)$, 
then each element $g$ is adjacent precisely to the 
vertices $\ell^{-1} gr=\ell^{-1} rg$, for 
$\ell\in L$, and hence $\Ga=\Cay(G,L^{-1}R)$, a contradiction. 
Hence $r$ lies in a $G$-conjugacy class $\mathcal{C}$ of non-central 
elements, and (a) holds. Note that $L\cap\mathcal{C}=\varnothing$ by 
Definition~\ref{def:2scprop}~(2).

Since $\Ga$ is connected, each $g\in G$ can be reached by a path 
from $e$, and by the definition of adjacency and using 
Definition~\ref{def:2scprop}~(1), we find that 
$g=\ell_k^{-1}\dots\ell_1^{-1}er_1\dots r_k$, for some $\ell_i\in L,  r_i\in R$.
Hence  $G=H\,\langle r\rangle$, where $H=\langle L\rangle$.
Now by Definition~\ref{def:2scprop}~(1) with $g=e$,
we have $L^{-1}r=Lr^{-1}$, and hence $r^2\in H$. 
Suppose that $H\ne G$. Then $H$ has index $2$ in $G$. Consider the bijection
$\theta:G\rightarrow H\times \mathbb{Z}_2$ given by 
$\theta: h\mapsto (h,0), hr\mapsto (h,1)$ for $h\in H$.
Then $\theta$ induces an isomorphism from $\Ga$ to $\Sigma:=\Cay(H\times
\mathbb{Z}_2, L')$ where $L'=\{(\ell^{-1},1) | \ell\in L\}$
(since, for all $h\in H$, an arc $(h,\ell^{-1} hr)$ is mapped to an arc 
$((h,0), (\ell^{-1} h,1))$ of $\Sigma$, while an arc $(hr,\ell^{-1} hr^2)$ of $\Ga$  is 
mapped to an arc $((h,1), (\ell^{-1} hr^2,0))$ of $\Sigma$).
This contradiction shows that $H=G$, so part (b) is proved.

If $G$ is abelian then there are no non-central elements, 
so (a) fails and therefore each $\Ga$ is a Cayley graph. Suppose
now that $G=\langle a,b\mid a^n=b^2=e, bab=a^{-1}\rangle\cong
D_{2n}$ with $n$ odd. Let $Z=\langle a\rangle\cong Z_n$, and
note that the set of involutions in $G$ forms a single conjugacy 
class and equals $G\setminus Z$. Let $\Ga, L, H, r, \mathcal{C}$ be as above.
Since $G=H$, it follows that $L$ must contain some element of
$G\setminus Z$, and by Theoprem~\ref{thm:iso}
we may assume that $b\in L$. Since
$L\cap\mathcal{C}=\varnothing$, the element $r$ belongs to $Z$, 
and in particular $|r|$ is odd (dividing $n$, recall that $r\ne e$). 
By  Definition~\ref{def:2scprop}(1), $L^{-1}gr=Lgr^{-1}$, and hence 
$L^{-1}(gr^2g^{-1})=L$, for each $g\in G$. Now $gr^2g^{-1}=r^2$ or $r^{-2}$ 
according as $g\in Z$ or $g\not\in Z$. Thus $L^{-1}r^{\pm 2}=L$. For 
each involution $b'\in L$ (for example, $b'=b$), this implies that
$L$ contains $b's$ for each $s\in\langle r^2\rangle = \langle r\rangle$ 
(recall that $|r|$ is odd). Simlarly if $a^i\in L$ then $a^{-i}r^{\pm2}\in L$,
and hence also $a^ir^{\pm 4}\in L$: so in this case we again find that $L$ 
contains $a^is$ for each   $s\in\langle r^4\rangle = \langle r\rangle$. 
Thus $L$ is a union of cosets of the subgroup $\langle r\rangle$,
and hence $|L|=p$ is divisible by $|r|$. It follows that $|r|=p$,
$L=b\langle r\rangle$, and hence that $G=\langle L\rangle=D_{2p}$
so $n=p$. An easy calculations shows that $\Ga$ is the 
Cayley graph $\Cay(G,L)$.
This contradiction completes the proof. 
\hfill{$\blacksquare$}\medskip

\section{Two-sided Cayley graphs and Cayley graphs}\label{sec:cay}


We begin by proving Theorem \ref{twosidedisCayley}, which gives  
sufficient conditions on $L, R$ for a two-sided Cayley graph $\2sc(G;L,R)$ to 
be a Cayley graph. Recall, for a group $G$, the regular permutation groups 
in $\Sym(G)$, $G_R=\{\lambda_{e,g}|g\in G\}$ and $G_L=\{\lambda_{g,e}|g\in G\}$,
each isomorphic to $G$.

\medskip\noindent
\textbf{Proof of Theorem~\ref{twosidedisCayley}.}\quad 
Let  $L, R$ be subsets of a group $G$ with the $2$S-Cayley property, and 
let $\Gamma=\2sc(G;L,R)$. By Definition~\ref{def:2scprop}(1), 
each arc of $\Ga$ has the form
$a=(x,\ell^{-1} xr)$ for some $x\in G, \ell\in L, r\in R$.

Suppose first that $L^{-1}gR=L^{-1}Rg$ for each $g\in G$. Then 
$a=(x,\ell^{-1} r'x)$ for some $r'\in R$.
The image of the arc $a$ under an element $\lambda_{e,g}
\in G_R$ is the pair $(xg,\ell^{-1} r'xg)=(xg,\ell^{-1} xgr'')$, for some $r''\in R$ (using
$L^{-1}(xg)R=L^{-1}R(xg)$), and this is again an arc. Hence $\lambda_{e,g}$ is an automorphism of 
$\Ga$, and it follows that $\Aut(\Ga)$ contains the regular subgroup $G_R$. Thus,
by Sabidussi's Theorem \cite{MR0097068}, $\Ga$ is a Caley graph for $G$. Indeed,
since each vertex $x$ is adjacent precisely to the vertices in $L^{-1}Rx$ it follows that 
$\Ga=\Cay(G,L^{-1}R)$. Conversely if $G_R\leq\Aut(\Ga)$, then the set of vertices
adjacent to $x=(e)\lambda_{e,x}$ is equal to the image under $\lambda_{e,x}$
of the set $L^{-1}R$ of vertices adjacent to $e$, and this is the set $L^{-1}Rx$.
Thus $L^{-1}xR=L^{-1}Rx$ for each $x$. Here $\Ga= \Cay(G, L^{-1}R)$.

A similar proof shows that, the condition $L^{-1}gR=gL^{-1}R$,
for each $g\in G$, holds if and only if $\Aut(\Ga)$ contains the regular 
subgroup $G_L$. (In this case $\Ga\cong \Cay(G,R^{-1}L)$ under the isomorphism 
$g\mapsto g^{-1}$ for $g\in G$.)
%
\hfill{$\blacksquare$}\medskip

Now we give sufficient conditions for $2SCay(G;L,R)$ to be a Cayley graph, 
as mentioned in Remark~\ref{rem:isCayley}. 
For any subset $H$ of a group $G$ , the \textit{normalizer} of $H$ in $G$ 
is $N_G(H)=\{g\in G\, |\, Hg=gH\}$. It is a subgroup of $G$. 

\begin{proposition}\label{twosidedisCayley1}
{Let  $L, R$ be subsets of a group $G$ with the $2$S-Cayley property, and let $\Gamma=\2sc(G;L,R)$.
Then $\Gamma$ is a Cayley graph if at least one of the following conditions holds :
\[
(\delta)\ (N_G(L^{-1})\cap N_G(L^{-1}R)).N_G(R)=G,
\]
\[
(\delta')\ N_G(L^{-1}).(N_G(L^{-1}R)\cap N_G(R))=G.
\]
In particular, if $G=N_G(L)$ then $\Ga=\Cay(G,L^{-1}R)$, and if $G=N_G(R)$
then $\Ga\cong\Cay(G,R^{-1}L)$.
}\end{proposition}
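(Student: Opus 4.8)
The plan is to deduce this directly from Theorem~\ref{twosidedisCayley}. That theorem tells us $\Ga$ is a Cayley graph (with $G_R$, respectively $G_L$, a regular subgroup of automorphisms) as soon as the set identity $L^{-1}gR=L^{-1}Rg$ holds for every $g\in G$, or the set identity $L^{-1}gR=gL^{-1}R$ holds for every $g\in G$. So it suffices to show that $(\delta)$ forces the first identity and that $(\delta')$ forces the second; the displayed conclusion then comes from the relevant part of Theorem~\ref{twosidedisCayley}.

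Take $(\delta)$ first. Given $g\in G$, I would use the factorisation $G=(N_G(L^{-1})\cap N_G(L^{-1}R))\,N_G(R)$ to write $g=uv$ with $u\in N_G(L^{-1})\cap N_G(L^{-1}R)$ and $v\in N_G(R)$, and then slide the three sets past the group elements one at a time, each slide licensed by exactly one normaliser membership: $vR=Rv$ (from $v\in N_G(R)$), then $L^{-1}u=uL^{-1}$ (from $u\in N_G(L^{-1})$), and finally $u(L^{-1}R)=(L^{-1}R)u$ (from $u\in N_G(L^{-1}R)$). Chaining these gives
\[
L^{-1}gR=L^{-1}uvR=L^{-1}uRv=uL^{-1}Rv=L^{-1}Ruv=L^{-1}Rg,
\]
which is exactly the hypothesis of Theorem~\ref{twosidedisCayley}(a), so $G_R\leq\Aut(\Ga)$ and $\Ga=\Cay(G,L^{-1}R)$.

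The argument under $(\delta')$ is the mirror image. Writing $g=uv$ with $u\in N_G(L^{-1})$ and $v\in N_G(L^{-1}R)\cap N_G(R)$, I slide $u$ across $L^{-1}$ and then slide $v$ across $R$ and across $L^{-1}R$, obtaining
\[
L^{-1}gR=L^{-1}uvR=uL^{-1}vR=uL^{-1}Rv=uvL^{-1}R=gL^{-1}R,
\]
the hypothesis of Theorem~\ref{twosidedisCayley}(b). The two special cases then follow by collapsing one factor of the relevant product to all of $G$: if $G=N_G(R)$ then the right-hand factor of $(\delta)$ is already $G$ and $(\delta)$ holds trivially; if $G=N_G(L)$ then, using the elementary identity $N_G(L)=N_G(L^{-1})$ (inverting $Lg=gL$ yields $g^{-1}L^{-1}=L^{-1}g^{-1}$), the left-hand factor of $(\delta')$ is $G$ and $(\delta')$ holds trivially. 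In each case the Cayley presentation is read off from the corresponding part of Theorem~\ref{twosidedisCayley}.

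I do not expect a genuine obstacle here: because $N_G(H)$ gives the \emph{exact} set equality $Hg=gH$ (not merely an inclusion), every step above is an equality of sets, so no separate counting or inclusion-reversal argument is needed. The only real care is bookkeeping — ordering the slides so that each one is justified by a single normaliser membership of $u$ or $v$, and tracking which element remains fixed while the other commutes through. In particular I would double-check that $(\delta)$ genuinely requires \emph{both} memberships of $u$, since the conversion $L^{-1}uR=L^{-1}Ru$ uses $u\in N_G(L^{-1})$ to pass $u$ across $L^{-1}$ and $u\in N_G(L^{-1}R)$ to pass it across $L^{-1}R$.
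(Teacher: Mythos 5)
Your treatment of $(\delta)$ and $(\delta')$ is correct and is essentially the paper's own argument: the paper uses exactly the same factorisation $g=xy$ supplied by the hypothesis and the same normaliser slides (performed two at a time, $L^{-1}xyR=xL^{-1}Ry=L^{-1}Rxy$), before invoking Theorem~\ref{twosidedisCayley}(a), respectively (b).

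There is, however, a genuine mismatch in your handling of the two ``in particular'' clauses. Your routing proves: $G=N_G(R)\Rightarrow(\delta)\Rightarrow\Ga=\Cay(G,L^{-1}R)$ via part (a), and $G=N_G(L)\Rightarrow(\delta')\Rightarrow\Ga\cong\Cay(G,R^{-1}L)$ via part (b). The proposition asserts the \emph{opposite} pairing: $G=N_G(L)$ is claimed to give $\Cay(G,L^{-1}R)$ and $G=N_G(R)$ to give $\Cay(G,R^{-1}L)$. So ``reading off the corresponding part of Theorem~\ref{twosidedisCayley}'' delivers the swapped conclusions, not the stated ones, and your assertion that the special cases ``follow'' is not justified as written. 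To bridge the two pairings one needs an extra observation: in either special case the two connection sets coincide. Indeed, condition (1) of Definition~\ref{def:2scprop} at $g=e$ gives $L^{-1}R=LR^{-1}$, and if $L$ (or $R$) is a normal subset then also $R^{-1}L=LR^{-1}$ (for instance $r^{-1}\ell=(r^{-1}\ell r)r^{-1}\in LR^{-1}$ when $L$ is normal), whence $L^{-1}R=R^{-1}L$ and $\Cay(G,L^{-1}R)=\Cay(G,R^{-1}L)$ as graphs. Even with this, the clause for $G=N_G(L)$ only yields $\Ga\cong\Cay(G,L^{-1}R)$ (part (b) produces the inversion isomorphism, not equality); literal equality can fail, e.g.\ for $G=\Sym(3)$, $L=\{(123),(132)\}$, $R=\{(12)\}$ the neighbours of $(123)$ in $\Ga$ are $\{(12),(23)\}$, while in $\Cay(G,L^{-1}R)$ they are $\{(12),(13)\}$. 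In other words, the pairing you actually proved is the one consistent with the paper's own distinction between $=$ and $\cong$, and the statement's pairing appears to be a slip; but a careful write-up must either say so explicitly or supply the identity $L^{-1}R=R^{-1}L$ above --- one cannot simply claim the stated clauses follow from the $(\delta)$/$(\delta')$ arguments. (For comparison, the paper's own proof is entirely silent on these clauses.)
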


\begin{proof}
{Let $g\in G$. We show that $L^{-1}gR=L^{-1}Rg$ if condition ($\delta$) holds. From $(\delta)$ 
we have $g=xy$ where $x\in N_G(L^{-1})\cap N_G(L^{-1}R))$ and $y\in N_G(R)$. 
Therefore, $L^{-1}gR=L^{-1}xyR=xL^{-1}Ry=L^{-1}Rxy=L^{-1}Rg$ and then, by Theorem~\ref{twosidedisCayley},
$\Ga=\Cay(G,L^{-1}R)$.
A similar argument shows that condition $(\delta')$ 
implies that $L^{-1}gR=gL^{-1}R$ for each $g$ in $G$. 
Then by Theorem~\ref{twosidedisCayley},
$\Ga\cong \Cay(G,R^{-1}L)$.
}\end{proof}

Finally, as mentioned in Remark~\ref{rem:isCayley}, we study 
elements in $\langle G_L, G_R\rangle = \{\lambda_{x,y}\,|\, x, y\in G\}$
which act as automorphisms of $\2sc(G:L,R)$, and obtain a sufficient condition for vert-transitivity.

\begin{proposition}\label{vt}
Let  $L, R$ be non-empty subsets of a group $G$, and let $\Gamma=\2sc(G;L,R)$.
Then $\Aut(\Ga)$ contains $K:=\{\lambda_{x,y}\,|\, x\in N_G(L), y\in N_G(R)\}$
and $N_{\Aut(G)}(L)\cap N_{\Aut(G)}(R)$. In particular, $K$ acts vertex-transitively 
on $\Ga$ if and only if $G=N_G(L)N_G(R)$.
\end{proposition}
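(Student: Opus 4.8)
The plan is to verify the three assertions directly from the definition of arcs of $\Gamma=\2sc(G;L,R)$, using that every arc has the form $(g,\ell^{-1}gr)$ with $\ell\in L$, $r\in R$. I recall that a permutation of the vertex set $G$ is an automorphism of the (directed) graph $\Gamma$ exactly when it, together with its inverse, carries arcs to arcs; note that I do \emph{not} assume the $2$S-Cayley property here, so $\Gamma$ is treated as a directed graph possibly having loops or multiple arcs, but this has no effect on the argument.

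First I would show $K\leq\Aut(\Ga)$. Fixing $x\in N_G(L)$, $y\in N_G(R)$ and an arc $(g,\ell^{-1}gr)$, applying $\lambda_{x,y}\colon g\mapsto x^{-1}gy$ to both endpoints gives the pair $(x^{-1}gy,\ x^{-1}\ell^{-1}gry)$. Putting $\ell':=x^{-1}\ell x$ and $r':=y^{-1}ry$, a one-line computation gives $(\ell')^{-1}(x^{-1}gy)r'=x^{-1}\ell^{-1}gry$, so the image pair is $(x^{-1}gy,(\ell')^{-1}(x^{-1}gy)r')$. The crucial point is that $x\in N_G(L)$ gives $x^{-1}Lx=L$, whence $\ell'\in L$, and similarly $r'\in R$; hence the image is again an arc. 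Since normalisers are subgroups, $\lambda_{x,y}^{-1}=\lambda_{x^{-1},y^{-1}}$ has the same form, so arcs are preserved in both directions and $\lambda_{x,y}\in\Aut(\Ga)$; the composition rule $\lambda_{x,y}\lambda_{x',y'}=\lambda_{xx',yy'}$ then shows that $K$ is in fact a subgroup.

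For the containment of $N_{\Aut(G)}(L)\cap N_{\Aut(G)}(R)$, I would take $\sigma\in\Aut(G)$ with $L^\sigma=L$ and $R^\sigma=R$ and apply it to the arc $(g,\ell^{-1}gr)$; since $\sigma$ is a group automorphism the image is $(g^\sigma,(\ell^\sigma)^{-1}g^\sigma r^\sigma)$ with $\ell^\sigma\in L$, $r^\sigma\in R$, hence an arc, and $\sigma^{-1}$ likewise fixes $L,R$ setwise. For the final equivalence I would compute the orbit of the identity $e$ under $K$: as $(e)\lambda_{x,y}=x^{-1}y$, this orbit is $\{x^{-1}y : x\in N_G(L),\,y\in N_G(R)\}=N_G(L)N_G(R)$ (using that $N_G(L)$ is inverse-closed). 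Since $K$ is a group, it acts transitively on $G$ if and only if the orbit of $e$ is all of $G$, that is, if and only if $G=N_G(L)N_G(R)$.

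These computations are routine and I anticipate no genuine obstacle; the only points needing care are the bookkeeping of the normaliser condition $N_G(H)=\{g : Hg=gH\}$, so that conjugation by $x$ preserves $L$, and the consistent use of the right-action convention $(g)\lambda_{x,y}=x^{-1}gy$ both when checking closure of $K$ under composition and when identifying the orbit of $e$.
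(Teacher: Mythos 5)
Your proof is correct and follows essentially the same route as the paper's: a direct check that each $\lambda_{x,y}$ with $x\in N_G(L)$, $y\in N_G(R)$ and each $\sigma\in N_{\Aut(G)}(L)\cap N_{\Aut(G)}(R)$ sends the arc $(g,\ell^{-1}gr)$ to another arc, followed by identifying the $K$-orbit of $e$ with $N_G(L)N_G(R)$. Your explicit choices $\ell'=x^{-1}\ell x$, $r'=y^{-1}ry$, the remark that $K$ is closed under composition and inverses, and the clean orbit computation make the bookkeeping slightly more careful than the paper's, but the argument is the same.
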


\begin{proof}{
We note that $N_G(L)$ is a subgroup and is equal to $N_G(L^{-1})$, 
with similar comments for $N_G(R)$. 
Let $\lambda_{x,y}\in K$, and let $a=(g,\ell^{-1}gr))$ be an arc of $\Ga$.
The image of $a$ under $\lambda_{x,y}$ is the pair $(x^{-1}gy,x^{-1}\ell^{-1}gry)$,
and this is an arc of $\Ga$ since $x^{-1}\ell^{-1}gry=(\ell')^{-1}(x^{-1}gy)r'$, for 
some $\ell'\in L$ and $r'\in R$, by the definition of $K$.
Let $\sigma\in N_{\Aut(G)}(L)\cap N_{\Aut(G)}(R)$. Then the image of $a$
under $\sigma$ is $(g^\sigma, \ell^{-\sigma}g^\sigma r^\sigma)$, and 
this is an arc of $\Ga$ since $\ell^\sigma\in L, r^\sigma\in R$.

Let $g\in G$. Suppose first that $G=N_G(L)N_G(R)$.  Then $g=xy$ for 
some $x\in N_G(L), y\in N_G(R)$, and  $\lambda_{x,y}\in K$
maps $e$ to $g$ so $K$ is vertex-transitive. Conversely suppose that 
$K$ is vertex-transitive, so some $\lambda_{x,y}\in K$ maps $e$ to $g$.
Then $g=x^{-1}ey\in N_G(L)N_G(R)$.
}
\end{proof}

The conditions involved in the $2S$-Cayley property become much simpler 
in the case where $G=N_G(L)N_G(R)$.

\begin{proposition}\label{simple}
 Let  $L, R$ be non-empty subsets of a group $G$ such that 
$G=N_G(L)N_G(R)$, and let $\Gamma=\2sc(G;L,R)$. Then $(L,R)$ 
has the $2S$-Cayley property if and only if the following all hold.
\begin{enumerate}
 \item[$(1')$] $L^{-1}R=LR^{-1}$;
 \item[$(2')$] $L\cap R=\varnothing$;
 \item[$(3')$] $L^{-1}L\cap R^{-1}R=\{e\}$. 
\end{enumerate}
\end{proposition}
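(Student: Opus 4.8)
The plan is to establish the three equivalences $(1)\Leftrightarrow(1')$, $(2)\Leftrightarrow(2')$ and $(3)\Leftrightarrow(3')$ separately, where $(1),(2),(3)$ are the conditions of Definition~\ref{def:2scprop}. In each case the primed condition is exactly the instance $g=e$ of the corresponding unprimed one, so the implication ``$2S$-Cayley property $\Rightarrow$ primed conditions'' is immediate. All the content lies in the reverse implications, namely upgrading the single identity at $g=e$ to a statement for all $g\in G$, and this is precisely where the hypothesis $G=N_G(L)N_G(R)$ is used. Throughout I will use that $N_G(L)$ is a subgroup equal to $N_G(L^{-1})$ (as already noted in the proof of Proposition~\ref{vt}), so that for $x\in N_G(L)$ we have $xL=Lx$, $xL^{-1}=L^{-1}x$ and $L^x=L$, and symmetrically for $R$ and $y\in N_G(R)$; and I will write each $g\in G$ as $g=xy$ with $x\in N_G(L)$, $y\in N_G(R)$.

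For $(1')\Rightarrow(1)$ the mechanism is to slide the normalising factors through and apply $(1')$ in the middle. Writing $g=xy$ as above, I would compute
\[
L^{-1}gR = x\,L^{-1}R\,y = x\,LR^{-1}\,y = L\,xy\,R^{-1} = LgR^{-1},
\]
where the first and third equalities use the normaliser relations above and the second is $(1')$. For $(2')\Rightarrow(2)$, since $x$ fixes $L$ under conjugation we get $L^g=L^{xy}=L^y$, and conjugating by $y$ (which fixes $R$ setwise) shows that $L^y\cap R=\varnothing$ is equivalent to $L\cap R=\varnothing$, that is, to $(2')$.

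Condition $(3)$ is the one I expect to be the main obstacle, because of a genuine mismatch of form: condition $(3)$ naturally involves $LL^{-1}$ and $RR^{-1}$ (its $g=e$ instance is $LL^{-1}\cap RR^{-1}=\{e\}$), whereas the target $(3')$ is phrased with $L^{-1}L$ and $R^{-1}R$, and in a non-abelian group these sets differ, so one cannot simply reduce to $g=e$ and read off $(3')$. The resolution I propose rests on the observation that $LL^{-1}$ and $L^{-1}L$ meet exactly the same conjugacy classes of $G$: each $\ell_i^{-1}\ell_j\in L^{-1}L$ is conjugate to $\ell_j\ell_i^{-1}\in LL^{-1}$ (since $\ell_i(\ell_i^{-1}\ell_j)\ell_i^{-1}=\ell_j\ell_i^{-1}$), and symmetrically, so a conjugacy class meets $LL^{-1}$ if and only if it meets $L^{-1}L$, and likewise for $R$. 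Now condition $(3)$, being quantified over all $g$, says precisely that no non-identity conjugacy class meets both $LL^{-1}$ and $RR^{-1}$; by the class-coincidence just noted this is equivalent to the assertion that no non-identity class meets both $L^{-1}L$ and $R^{-1}R$, that is, $(L^{-1}L)^g\cap R^{-1}R=\{e\}$ for all $g$. Finally I would reduce this last ``for all $g$'' statement to its $g=e$ instance $(3')$ by the same factorisation argument used for $(2)$: since $L^{-1}L$ is $N_G(L)$-invariant and $R^{-1}R$ is $N_G(R)$-invariant, for $g=xy$ one has $(L^{-1}L)^g=(L^{-1}L)^y$, and conjugating by $y$ turns $(L^{-1}L)^y\cap R^{-1}R=\{e\}$ into $L^{-1}L\cap R^{-1}R=\{e\}$.

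Assembling the three equivalences proves the proposition. It is worth isolating one reusable principle, namely the factorisation reduction: \emph{if $A$ is $N_G(L)$-invariant, $B$ is $N_G(R)$-invariant, $e\in A\cap B$ and $G=N_G(L)N_G(R)$, then ``$A^g\cap B=\{e\}$ for all $g$'' is equivalent to $A\cap B=\{e\}$}; this is what drives the treatment of $(2)$ and of $(3')$, applied to $(A,B)=(L^{-1}L,R^{-1}R)$. The only non-formal ingredient in the whole argument is the conjugacy-class coincidence between $LL^{-1}$ and $L^{-1}L$, which is exactly what bridges the stated form of $(3)$ and the stated form of $(3')$.
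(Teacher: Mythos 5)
Your proof is correct, and for conditions $(1)$ and $(2)$ it is exactly the paper's argument: the paper details only $(2)$ (write $g=\ell r$ with $\ell\in N_G(L)$, $r\in N_G(R)$, use $L^{\ell}=L$, $R^{r}=R$, and slide the factors through), and your treatment of $(1)$ and $(2)$ is the same factorise-and-slide computation. The genuine divergence is in condition $(3)$, where the paper says only ``similar proofs''. As you rightly point out, the literally similar proof --- using that $LL^{-1}$ is invariant under conjugation by $N_G(L)$ --- reduces Definition~\ref{def:2scprop}(3) to its $g=e$ instance $LL^{-1}\cap RR^{-1}=\{e\}$, which is \emph{not} the stated $(3')$: $L^{-1}L\cap R^{-1}R=\{e\}$ (your opening claim that each primed condition is the $g=e$ instance of the unprimed one is thus overstated for $(3)$, but you catch and repair this yourself). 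Your bridge --- that $ab$ and $ba$ are always conjugate, hence $LL^{-1}$ and $L^{-1}L$ (and likewise $RR^{-1}$ and $R^{-1}R$) meet precisely the same conjugacy classes, while the all-$g$ condition $(LL^{-1})^g\cap RR^{-1}=\{e\}$ is exactly the statement that no non-identity class meets both sets --- is correct and is precisely the ingredient the paper's one-line remark needs but never states. So your write-up is in this respect more complete than the paper's: it proves the proposition as actually stated, and as a by-product shows that under $G=N_G(L)N_G(R)$ the two candidate $g=e$ conditions $LL^{-1}\cap RR^{-1}=\{e\}$ and $L^{-1}L\cap R^{-1}R=\{e\}$ are equivalent, so the proposition is correct whichever form of $(3')$ one takes. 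Your isolated ``factorisation reduction'' principle (for $N_G(L)$-invariant $A$ and $N_G(R)$-invariant $B$ containing $e$, the condition $A^g\cap B=\{e\}$ for all $g$ is equivalent to $A\cap B=\{e\}$) is also a clean way to package what the paper does ad hoc.
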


\begin{proof}
{We give full details for part $(2')$.
Let $g\in G$. Since $N_G(L)N_G(R)=G$, $g=\ell r$ for some $\ell\in N_G(L)$ 
and $r\in N_G(R)$. Recall that $N_G(L)=N_G(L^{-1})$, etc. 
\begin{center}
$\begin{array}{lll}
  L^g\cap R=\varnothing & \Longleftrightarrow & L^{\ell r}\cap R=\varnothing \\
   & \Longleftrightarrow & L^{\ell}\cap R^{r^{-1}}=\varnothing \\
   & \Longleftrightarrow & L\cap R=\varnothing.
\end{array}$
\end{center}
Thus  Definition~\ref{def:2scprop}(2) holds if and only if $L\cap R=\varnothing$. 
Similar proofs show that, for $i=1,3$, Definition~\ref{def:2scprop}$(i)$ holds  
if and only if $(i')$ holds.
}\end{proof}

{\bf Acknowledgements}

The first author acknowledges support of an Endeavour Award of the 
Australian Government for his study leave during which the research for this
paper began. He thanks the School of Mathematics and Statistics, 
University of Western Australia, for their warm hospitality during his visiting 
appointment and for the facilities and help provided. In addition he thanks 
the Iranian Elite National Foundation (Bonyad Melli Nokhbegan) for financial 
support and the Department of Mathematical Sciences, Sharif University of Technology, 
for their hospitality during his postdoctoral appointment and for the facilities.

\thebibliography{10}

\bibitem{ann90a}
Fred Annexstein, Marc Baumslag, and Arnold L. Rosenberg, Group action graphs and
parallel architectures, \emph{SIAM J. Comput.} 19 (1990), no. 3, 544--569. MR MR1041547
(91g:68070)

\bibitem{MR1556916}
Reinhold Baer, Gruppen mit hamiltonschem Kern, \emph{Compositio Math.} 2 (1935), 
241--246. MR 155691614

\bibitem{baf96a}
Vineet Bafna and Pavel A. Pevzner, Genome rearrangements and sorting by reversals,
\emph{SIAM J. Comput.} 25 (1996), no. 2, 272--289. MR MR1379301 (97d:92007)

\bibitem{bei04a}
Lowell W. Beineke and Robin J. Wilson (eds.), \emph{Topics in algebraic graph theory}, En-
cyclopedia of Mathematics and its Applications, vol. 102, Cambridge University Press,
Cambridge, 2004. MR MR2125091 (2005m:05002)

%

\bibitem{cay78a}
Professor Cayley, Desiderata and Suggestions: No. 2. The Theory of Groups: Graphical
Representation, \emph{Amer. J. Math.} 1 (1878), no. 2, 174--176. MR MR1505159

\bibitem{MR1510943}
R. Dedekind, Ueber Gruppen, deren s\"ammtliche Theiler Normaltheiler sind, \emph{Math. Ann.}
48 (1897), no. 4, 548--561. MR 1510943

\bibitem{gau97a}
[9] Ginette Gauyacq, On quasi-Cayley graphs, \emph{Discrete Appl. Math.} 77 (1997), 
no. 1, 43--58. MR MR1460327 (98h:05089)


\bibitem{hah97a}
Ge\v{n}a Hahn and Gert Sabidussi (eds.), \emph{Graph symmetry}, NATO Advanced Science Insti-
tutes Series C: Mathematical and Physical Sciences, vol. 497, Dordrecht, Kluwer Aca-
demic Publishers Group, 1997, Algebraic methods and applications. MR MR1468785
(98d:05002)

%

\bibitem{kec95a}
J. Kececioglu and D. Sankoff, Exact and approximation algorithms for sorting by reversals,
with application to genome rearrangement, \emph{Algorithmica} 13 (1995), no. 1--2,
180--210. MR MR1304314 (95j:68125)

\bibitem{kel03a}
Andrei V. Kelarev and Cheryl E. Praeger, On transitive Cayley graphs of groups
and semigroups, \emph{European J. Combin.} 24 (2003), no. 1, 59--72. MR MR1957965
(2003k:20104)

\bibitem{lak93a}
S. Lakshmivarahan, Jung Sing Jwo, and S. K. Dhall, Symmetry in interconnection networks
based on Cayley graphs of permutation groups: a survey, \emph{Parallel Comput.} 19
(1993), no. 4, 361--407. MR MR1216119 (94j:68240)

%
%
%

\bibitem{mwa01a}
E. Mwambene, \emph{Representing graphs on groupoids: symmetry and form}, Ph. D. Thesis, 
University of Vienna, 2001.

\bibitem{mwa09a}
E. Mwambene, Cayley graphs on left quasi-groups and groupoids representing
$k$-generalised Petersen graphs, \emph{Discrete Math.} 309 (2009), 2544--2547. 

%

\bibitem{MR0097068}
Gert Sabidussi, On a class of fixed-point-free graphs, \emph{Proc. Amer. Math. Soc.} 9 (1958), 800--804.
MR 0097068 (20 \#3548)


\end{document}